\newtheorem{teo}{Theorem}[section]
\newtheorem{cor}[teo]{Corollary}
\newtheorem{lem}[teo]{Lemma}
\newtheorem{prop}[teo]{Proposition}
\theoremstyle{remark}
\newtheorem{oss}[teo]{Remark}
\theoremstyle{definition}
\newtheorem{defi}[teo]{Definition}
\theoremstyle{definition}
\newtheorem{ex}[teo]{Example}
\theoremstyle{conjecture}
\newtheorem{conj}{Conjecture}
\newtheorem*{thm}{Theorem}
\numberwithin{equation}{section}
\DeclareMathOperator{\Can}{Can}
\DeclareMathOperator{\T}{T}
\DeclareMathOperator{\End}{End}
\DeclareMathOperator{\h}{h}
\renewcommand{\phi}{\varphi}
\renewcommand{\epsilon}{\varepsilon}
\newcommand{\de}{\partial }
\DeclareMathOperator{\W}{F}
\renewcommand{\S}{\mathcal{S}}
\DeclareMathOperator{\FF}{\mathbf{F}}
\DeclareMathOperator{\HK}{\mathbf{HK}}
\newcommand{\cb}{\preceq}
\newcommand{\ca}{\succeq}
\newcommand{\qsubw}{\leq}
\date{\today}
\begin{document}
\title[A graph-dynamical interpretation of Kiselman's semigroups]{A graph-dynamical interpretation of Kiselman's semigroups}
\author[E.~Collina]{Elena Collina}
\email{collina@dmmm.uniroma1.it, elena.collina@bancaditalia.it}
\address{Dipartimento di Scienze di Base e Applicate per l'Ingegneria, Universit\`a di
Roma ``La Sapienza''\\
Via Antonio Scarpa 14/16 -- 00161 Rome, Italy}
\address{Servizio Gestione Rischi Finanziari, Banca d'Italia\\
Via Nazionale 91 -- 00184 Rome, Italy}
\author[A.~D'Andrea]{Alessandro D'Andrea}
\email{dandrea@mat.uniroma1.it}
\address{Dipartimento di Matematica, Universit\`a di
Roma ``La Sapienza''\\
P.le Aldo Moro, 5 -- 00185 Rome, Italy}

\begin{abstract}
A Sequential Dynamical System (SDS) is a quadruple $(\Gamma, S_i,f_i,w)$ consisting of a (directed) graph $\Gamma=(V,E)$,
each of whose vertices $i\in V$ is endowed with a finite set state $S_i$ and an update function $f_i: \prod_{j, i \to j} S_j \to S_i$ --- we call this structure an {\em update system} --- and a word $w$ in the free monoid over $V$, specifying the order in which update functions are to be performed. Each word induces an evolution of the system and in this paper we are interested in the dynamics monoid, whose elements are all possible evolutions.

When $\Gamma$ is a directed acyclic graph, the dynamics monoid of every update system supported on $\Gamma$ naturally arises as a quotient of the Hecke-Kiselman monoid associated with $\Gamma$. In the special case where $\Gamma = \Gamma_n$ is the complete oriented acyclic graph on $n$ vertices, we exhibit an update system whose dynamics monoid coincides with Kiselman's semigroup $\mathrm{K}_n$, thus showing that the defining Hecke-Kiselman relations are optimal in this situation. We then speculate on how these results may extend to the general acyclic case.
\end{abstract}
\maketitle
\tableofcontents 

\section{Introduction}
In this paper, we show how the recently defined notion of Hecke-Kiselman monoid finds a natural realization in the combinatorial-computational setting of Sequential Dynamical Systems.

Let $Q$ be a mixed graph, i.e., a simple graph with at most one connection between each pair of distinct vertices; connections can be either oriented (arrows) or non-oriented (edges). In \cite{MR2821178}, Ganyushkin and Mazorchuk associated with $Q$ a semigroup $\HK_Q$ generated by idempotents $a_i$ indexed by vertices of $Q$, subject to the following relations
\begin{itemize}
\item $a_ia_j=a_ja_i$, if $i$ and $j$ are not connected;
\item $a_ia_ja_i=a_ja_ia_j$, if $(\xymatrix@-1pc{i\ar@{-}[r]&j}) \in Q$, i.e., $i$ and $j$ are connected by an edge;
\item $a_ia_j=a_ia_ja_i=a_ja_ia_j$, if $(\xymatrix@-1pc{i\ar[r]&j}) \in Q$, i.e., $i$ and $j$ are connected by an arrow from $i$ to $j$.
\end{itemize}
The semigroup $\HK_Q$ is known as the \textit{Hecke-Kiselman monoid} attached to $Q$.

For the two extremal types of mixed graphs --- graphs, where all sides are edges, and oriented graphs, in which all sides are arrows --- Hecke-Kiselman monoids are well understood: when $Q$ is an oriented graph with $n$ vertices and no oriented cycles, then $\HK_Q$ is isomorphic to a quotient of Kiselman's semigroup $\mathrm{K}_n$ \cite{MR1881029, MR2561084}, which is known to be finite \cite{MR2561084}.

On the other hand, when $Q$ has only unoriented edges, $\HK_Q$ is finite if and only if $Q$ is a (finite) disjoint union of finite simply laced Dynkin diagrams, and the corresponding semigroup is then variously known as \textit{Springer-Richardson}, \textit{$0$-Hecke}, or \textit{Coxeter monoid} attached to $Q$. The problem of characterizing mixed graphs inducing a finite Hecke-Kiselman monoid in the general case seems to be difficult and only very partial results are known \cite{small}. The study of (certain quotients of) Hecke-Kiselman monoids and their representations has also attracted recent interest, see for instance \cite{forsberg, grensing, grensingmazorchuk}.

The choice of a simple (i.e., without loops or multiple edges) 
graph $\Gamma$ is also one of the essential ingredients in the definition of a Sequential Dynamical System (SDS). The notion of SDS has been introduced by Barrett, Mortveit and Reidys \cite{BMR1, BMR2, BR} in order to construct a mathematically sound framework for investigating computer simulations; this structure has found wide applicability in many concrete situations, cf. \cite{MR2357144} and references therein. SDS on a directed acyclic graph $\Gamma$ are related to Hecke-Kiselman monoids in that (see Proposition \ref{P: I, II, III} below) the so-called $\Gamma$-local functions \cite{MR2357144} satisfy the relations listed in the presentation of $\HK_\Gamma$; in other words, the evaluation morphism mapping each word (or update schedule) in the alphabet $V$ to the corresponding composition of $\Gamma$-local functions factors through $\HK_\Gamma$.

One is naturally led to wonder whether $\HK_\Gamma$ is the smallest quotient through which all of the above evaluation morphisms must factor, or additional universal relations among $\Gamma$-local functions may be found. Henceforth, $\Gamma_n = (V, E)$ will denote the oriented graph where $V = \{1, \dots, n\}$ and $(i, j) \in E$ if and only if $i < j$. In this paper, we show that $\HK_\Gamma$ is optimal in the special case $\Gamma = \Gamma_n$, by proving Theorem \ref{T: Can(w)=[p_n, dots [p_2, p_1]]}, which immediately implies the following statement.
\begin{thm}
There exists an update scheme $\S_n^\star = (\Gamma_n, S_i, f_i)$ such that the associated evaluation morphism factors through no nontrivial quotient of $\HK_{\Gamma_n} $.
\end{thm}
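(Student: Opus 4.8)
The plan is to construct an explicit update system $\S_n^\star = (\Gamma_n, S_i, f_i)$ on the complete oriented acyclic graph $\Gamma_n$ whose dynamics monoid is isomorphic to Kiselman's semigroup $\mathrm{K}_n$ itself, rather than a proper quotient of it. By Proposition~\ref{P: I, II, III}, the evaluation morphism for \emph{any} update system on $\Gamma_n$ automatically factors through $\HK_{\Gamma_n}$; since $\Gamma_n$ is oriented and acyclic, $\HK_{\Gamma_n}$ is a quotient of (and in fact, by the cited results, isomorphic to) $\mathrm{K}_n$. Thus the only way the evaluation morphism can factor through a \emph{proper} quotient of $\HK_{\Gamma_n}$ is if two distinct elements of $\mathrm{K}_n$ act identically on the chosen state configuration. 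To prove the theorem it therefore suffices to exhibit a single update scheme for which the induced action of $\mathrm{K}_n$ on $\prod_i S_i$ is faithful, i.e. distinct words $w, w'$ that are inequivalent in $\mathrm{K}_n$ induce distinct evolutions.

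The key reduction, which I expect to be supplied by Theorem~\ref{T: Can(w)=[p_n, dots [p_2, p_1]]}, is a description of a canonical form $\Can(w)$ for elements of $\mathrm{K}_n$ in terms of nested bracketing of the generators $p_1, \dots, p_n$; the notation $[p_n,\dots[p_2,p_1]]$ in the theorem label strongly suggests that each element of $\mathrm{K}_n$ is uniquely encoded by such an expression, which is precisely the combinatorial data I would use to separate inequivalent words. First I would recall or derive this canonical form and use it to index the elements of $\mathrm{K}_n$. Then I would design the state sets $S_i$ and update functions $f_i$ so that the evolution of the system records enough of this canonical data to reconstruct $\Can(w)$ from the final state. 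A natural candidate is to let each vertex carry a state encoding which generators have already ``fired'' in a way that is order-sensitive along the arrows $i \to j$ (for $i<j$), since the defining relation $a_i a_j = a_i a_j a_i$ on an arrow is exactly what collapses repeated updates, and the update functions must be chosen to witness that no \emph{further} collapse occurs.

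The main obstacle, and the heart of the argument, is faithfulness: I must verify that the chosen $f_i$ do not accidentally identify two canonically distinct elements. Concretely, given two words $w \not\equiv w'$ in $\mathrm{K}_n$ with distinct canonical forms, I need to produce an initial global state $x \in \prod_i S_i$ on which the two induced evolutions disagree. The cleanest strategy is to make the global dynamics essentially ``remember'' the canonical form: if I can show that the map sending an element of $\mathrm{K}_n$ to its induced global transformation is injective, faithfulness follows. I would prove this by running the system on a suitably generic (or universal) initial state and showing that the resulting final configuration determines $\Can(w)$, appealing to the uniqueness part of the canonical form theorem. This last step is delicate because the update functions on $\Gamma_n$ must be consistent with \emph{all} the Hecke-Kiselman relations while still being fine enough to distinguish elements separated only by the Kiselman relations; balancing these two requirements — respecting the relations that must hold while breaking any spurious ones — is where the construction has to be tuned carefully.

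Finally, I would assemble these pieces: the evaluation morphism $\S_n^\star$ factors through $\HK_{\Gamma_n} \cong \mathrm{K}_n$ by the general proposition, and the faithfulness just established shows that its kernel congruence is trivial on $\mathrm{K}_n$, so it factors through no proper quotient. This yields exactly the stated conclusion. I would expect the bulk of the technical work to live in the explicit verification that the designed update functions realize the canonical form, which is why the paper isolates that content in Theorem~\ref{T: Can(w)=[p_n, dots [p_2, p_1]]} and derives the present statement as an immediate corollary.
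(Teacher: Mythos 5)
Your overall reduction is exactly the one the paper uses: by Proposition~\ref{P: I, II, III} every evaluation morphism on $\Gamma_n$ factors through $\HK_{\Gamma_n}=\mathrm{K}_n$, so it suffices to build one update system on which the induced action of $\mathrm{K}_n$ is faithful, and faithfulness is obtained by showing that the final system state reached from a fixed initial state determines $\Can w$, invoking uniqueness of canonical forms. You have also correctly guessed that the nested bracket $[p_n,[\dots,[p_2,p_1]\dots]]$ is the device that reconstructs $\Can w$ from the per-vertex data. So the architecture is right.

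The genuine gap is that the construction itself --- which is the entire mathematical content here --- is left as a placeholder, and the one concrete candidate you float would not work. Letting each vertex ``record which generators have already fired'' cannot separate, e.g., $a_1a_2$ from $a_2a_1$ in $\mathrm{K}_2$: both use the same set of generators, yet they are distinct elements, so set-valued (or multiset-valued, or order-insensitive) state data is too coarse. What the paper actually does is take the states themselves to be \emph{words}: $S_i\subset\W(A)$ is built inductively, the bracket $[u,v]$ is the \emph{join} (the shortest word admitting $u$ as a quasi-subword and $v$ as a suffix), and the update function is $f_i(s_{i+1},\dots,s_n)=a_i[s_n,[\dots,[s_{i+2},s_{i+1}]\dots]]$. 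One then proves (Proposition~\ref{P: main-theorem_for Gamma_n}) that starting from $\star$ the state at vertex $i$ after applying $w$ is exactly $\Can_{\{i,\dots,n\}}\T_i w$, and that the iterated joins of these states recover $\T_{\{1,\dots,k\}}\Can w$, hence $\Can w$ itself for $k=n$. That statement requires a double induction (on the number of vertices and on $k$) supported by the whole apparatus of Sections~\ref{combo}--\ref{join} (simplifying sequences, Lemmas~\ref{L: sempl-solo-a-dx}, \ref{L: can-trunc}, \ref{L: Can_I(w)<Can(w)}, and the delicate Proposition~\ref{P: technical} about joins of canonical words). Your proposal correctly identifies where the difficulty lies --- ``respecting the relations that must hold while breaking any spurious ones'' --- but does not supply the mechanism that resolves it, so as written it restates the problem rather than proving the theorem.
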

We believe that the same claim holds for every finite directed acyclic graph and mention in Section \ref{conclude} some evidence in support of this conjecture.

The paper is structured as follows. In Section \ref{SDS} we recall the definition of SDS, define the dynamics monoid of an update system supported on an oriented graph $\Gamma$, and show that it is a quotient of the Hecke-Kiselman monoid $\HK_\Gamma$ as soon as $\Gamma$ has no oriented cycles. In Sections \ref{combo} and \ref{canon} we list the results on Kiselman's semigroup $\mathrm{K}_n$ that are contained in \cite{MR2561084} and draw some useful consequences. Section \ref{join} introduces the {\em join operation}, which is the key ingredient in the definition of the update system $\S_n^\star$, which is given in Section \ref{sstella}. The rest of the paper is devoted to the proof of Theorem \ref{main}.

\section{Sequential dynamical systems}\label{SDS}

An \emph{update system} is a triple $\S=\left(\Gamma, (S_i)_{i \in V}, (f_i)_{i \in V}\right)$ consisting of
\begin{enumerate}
\item a \emph{base graph} $\Gamma=(V,E)$,
which is a finite directed graph, with $V$ as vertex set and $E\subseteq V\times V$ as edge set; we will write
$i\to j$ for $(i,j)\in E$. The \emph{vertex neighbourhood} of a given vertex $i \in V$ is the subset
\[
x[i]=\{j \colon i\to j\}.
\]
\item a collection $S_i, i \in V,$ of finite sets of {\em states}. We denote by $S=\prod_{i\in V}S_i$ the family of
all the possible \emph{system states}, i.e., $n$-tuples $\mathbf{s} = (s_i)_{i \in V}$, where $s_i$
belongs to $S_i$ for each vertex $i$. The \emph{state neighbourhood} of $i\in V$ is $S[i] = \prod_{j \in x[i]} S_j$, and the restriction of $\mathbf{s} = (s_j)_{j \in V}$ to $x[i]$ is denoted by
\[
\mathbf{s}[i]=(s_j)_{j\in x[i]}\in S[i].
\]
 \item for every vertex $i$, a \emph {vertex (update) function}
\begin{eqnarray*}
f_i\colon S[i] &\to& S_i,
\end{eqnarray*} computes the new state value on vertex $i$ as a function of its state neighbourhood.
In particular, if $x[i]$ is empty, then $f_i$ is a constant $t\in S_i$ and we will write $f_i\equiv t$.
Each vertex function $f_i$ can be incorporated into a \emph{$\Gamma$-local function} $\FF_i\colon S \to S$ defined as
\begin{eqnarray*}
\FF_i(\mathbf{s})=\mathbf{t}=(t_j)_{j \in V}, \mbox{ where }t_j=
\left\{
\begin{array}{cl}
s_j, & \mbox{if } i\neq j \\
f_i(\mathbf{s}[i]), & \mbox{if } i=j
\end{array}
\right.
\end{eqnarray*}
\end{enumerate}

An SDS is an update system $\S$ endowed with
\begin{enumerate}
\item[(4)] an {\em update schedule}, i.e., a word $w=i_1 i_2 \dots i_k$ in the free monoid $\W(V)$ over the alphabet $V$ (from now on, we will often abuse the notation denoting both the alphabet and the vertex set with the same letter $V$, and both letters and vertices with the same symbols).  The update schedule $w$ induces a dynamical system map (SDS map), or an {\em evolution} of $S$,
$\FF_w\colon S \to S$, defined as
\[
\FF_w=\FF_{i_1}\FF_{i_2}\dots \FF_{i_k}.
\]
\end{enumerate}
\begin{oss}
As the graph $\Gamma$ sets up a dependence relation between nodes under the action of update functions, it makes sense to allow $\Gamma$ to possess self-loops, and arrows connecting the same vertices but going in opposite directions. However, we exclude the possibility of multiple edges between any two given vertices. Notice, however, that all SDS of interest in this paper will be supported on directed {\em acyclic} graphs, thus excluding in particular the possibility of self-loops.
\end{oss}
Denote by $\End(S)$ the set of all maps $S \to S$, with the monoid structure given by composition. Then, the $\Gamma$-local functions $\FF_i, i \in V,$ generate a submonoid of $S$ which we denote by $D(\S)$. 
The monoid $D(\S)$ is the image of the natural homomorphism
\begin{eqnarray*}
\FF \colon \W(V) &\to& \End(S) \\
w &\mapsto & \FF_w.
\end{eqnarray*}
mapping each update schedule $w$ to the corresponding evolution $\FF_w$; in particular, we denote by $\FF_{\star}$ the identity map, induced by the empty word $\star$.

Once an underlying update system has been chosen, our goal is to understand the monoid structure of $D(\S)$.

\begin{ex}\label{Ex: A_1}
Let $\Gamma=(\{i\},\emptyset)$ be a Dynkin graph of type $A_1$. It has only one vertex $i$, so there is only
one vertex function $f_i$, which is constant as there are no arrows starting in $i$. The
system dynamics monoid
$D(\S)=\{\FF_{\star},\FF_i\}$ contains exactly two
elements, as soon as $|S_i|>1$. If $|S_i| = 1$, then $D(\S) = \{\FF_{\star}\}$.
\end{ex}

\begin{ex}\label{Ex: A_2}
 Let $\Gamma$ be the graph
\[
\xymatrix@1{
i\ar[r] & \,j
}
\]
Let us consider $S_i=\{0, 1, 2\}$, $S_j=\{0, 1\}$. Set up an update system on $\Gamma$ by requiring that $f_i\colon S[i]=S_j\to S_i$ acts as $f_i(s)=s+1$ and that $f_j\equiv 1$ . Evolutions induced by words
$\star$ (the empty word), $i, j, ij$ and $ji$  on $S = S_i \times S_j$ all differ from each other, as they take different values on $(0,0)$:
\begin{eqnarray*}
\FF_\star (0,0)& = & (0,0)\\
\FF_i(0,0)& = & (1,0)\\
\FF_j(0,0)& = & (0,1)\\
\FF_{ij}(0,0)=\FF_i \FF_j (0,0)=\FF_i (0,1)& = & (2,1)\\
\FF_{ji}(0,0)=\FF_j \FF_i (0,0)=\FF_j (1,0)& = & (1,1).
\end{eqnarray*}
Both $\FF_i$ and $\FF_j$ are idempotent. Moreover, it is easy to see that
\[
	\FF_{iji}=\FF_{jij}=\FF_{ij},
\]	
as $\FF_i \FF_j \FF_i = \FF_j \FF_i \FF_j = \FF_i \FF_j$,
so that $\FF_i \mapsto a_1, \FF_j \mapsto a_2$ extends to an isomorphism from $D(\S)$ to Kiselman's semigroup $\mathrm{K}_2$
\begin{equation*}
\mathrm{K}_2=\langle a_1, a_2 \,|\, a_1^2=a_1,\,a_2^2=a_2,\,a_1 a_2 a_1=a_2 a_1 a_2 = a_1 a_2\rangle.
\end{equation*}
\end{ex}
These examples are instances of the following statement.
\begin{prop}\label{P: I, II, III}
Let $\S=(\Gamma, S_i, f_i)$ be an update system defined on a directed acyclic graph $\Gamma$. Then the $\Gamma$-local functions $\FF_i$ satisfy:
\begin{enumerate}
\item[$(i)$] $\FF_i^2=\FF_i$,  for every $i\in V$;
									
\item[$(ii)$] $\FF_i\FF_j \FF_i=\FF_j \FF_i \FF_j = \FF_i \FF_j$, if $i\to j$ (and hence, $j \nrightarrow i$);

\item[$(iii)$] $\FF_i \FF_j=\FF_j \FF_i$, if $i$ and $j$ are not connected.
\end{enumerate}
\end{prop}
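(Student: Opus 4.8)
The plan is to exploit two structural features shared by every $\Gamma$-local function. First, $\FF_i$ alters only the $i$-th coordinate of a system state, leaving all others untouched. Second, the value $f_i(\mathbf{s}[i])$ that it writes there depends solely on the coordinates indexed by $x[i]$. Because $\Gamma$ is acyclic it carries no self-loops, so $i\notin x[i]$; this is the engine behind all three relations, since it means that applying $\FF_i$ never disturbs the data on which $\FF_i$'s own update is computed. The general strategy is thus to apply each word letter by letter, record which coordinates have been overwritten, and use the neighbourhood conditions to identify the inputs read at each step.

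For $(i)$ I would set $\mathbf{t}=\FF_i(\mathbf{s})$ and observe that $\FF_i$ changes only coordinate $i$, so $t_k=s_k$ for every $k\in x[i]$ (as $i\notin x[i]$); hence $\mathbf{t}[i]=\mathbf{s}[i]$ and $f_i(\mathbf{t}[i])=f_i(\mathbf{s}[i])=t_i$, giving $\FF_i(\mathbf{t})=\mathbf{t}$. For $(iii)$, the hypothesis that $i$ and $j$ are not connected means $j\notin x[i]$, $i\notin x[j]$, and $i\neq j$. Then $\FF_i$ and $\FF_j$ edit disjoint coordinates, and neither edit lands inside the other's input neighbourhood, so I would verify directly that both $\FF_i\FF_j(\mathbf{s})$ and $\FF_j\FF_i(\mathbf{s})$ send coordinate $i$ to $f_i(\mathbf{s}[i])$, coordinate $j$ to $f_j(\mathbf{s}[j])$, and fix everything else.

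The real content is $(ii)$, where acyclicity supplies the crucial asymmetry: $i\to j$ forces $j\nrightarrow i$, so $j\in x[i]$ but $i\notin x[j]$. Consequently the update of coordinate $j$ is blind to coordinate $i$, whereas the update of coordinate $i$ reads the (possibly already updated) coordinate $j$. I would write $s_j':=f_j(\mathbf{s}[j])$ and let $\mathbf{s}'$ denote $\mathbf{s}$ with its $j$-th entry replaced by $s_j'$, so that $\FF_i\FF_j(\mathbf{s})$ is the state with coordinate $j$ equal to $s_j'$ and coordinate $i$ equal to $f_i(\mathbf{s}'[i])$. Tracing $\FF_i\FF_j\FF_i(\mathbf{s})$: the first $\FF_i$ touches only coordinate $i$ and so does not perturb $x[j]$ (since $i\notin x[j]$), whence the subsequent $\FF_j$ still writes $s_j'$; the final $\FF_i$ then reads precisely $\mathbf{s}'[i]$, recovering $f_i(\mathbf{s}'[i])$. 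Tracing $\FF_j\FF_i\FF_j(\mathbf{s})$: after the first $\FF_j$ and then $\FF_i$ one already has coordinates $j$ and $i$ equal to $s_j'$ and $f_i(\mathbf{s}'[i])$, and the final $\FF_j$ reads its neighbourhood from unchanged entries (as neither $i$ nor $j$ lies in $x[j]$), leaving $s_j'$ in place. Both results coincide with $\FF_i\FF_j(\mathbf{s})$.

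The only delicate point — and the one I expect to require care rather than ingenuity — is this last bookkeeping: one must keep straight which coordinates have been overwritten before each factor acts, and invoke $i\notin x[j]$ at exactly the right moments to conclude that $\FF_j$'s output is stable under a preceding or following application of $\FF_i$. Once that is set up cleanly, all three relations fall out as direct unwindings of the definitions, and no genuine obstacle remains.
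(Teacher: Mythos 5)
Your proof is correct and follows essentially the same route as the paper's: a direct unwinding of the definitions hinging on $i\notin x[i]$ (no self-loops, by acyclicity) and $i\notin x[j]$ when $i\to j$. The only difference is presentational --- the paper first reduces $(ii)$ and $(iii)$ to a two-vertex subgraph, where $f_j$ becomes a constant, while you carry out the coordinate bookkeeping on the full state space directly.
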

\begin{proof}
Every $\Gamma$-local function $\FF_i$ only affects the vertex state $s_i$. As $\FF_i(\mathbf s)$ only depends on $\mathbf s[i]$, and $i \notin x[i]$, then each $\FF_i$ is idempotent.

For the same reason, it is enough to check $(ii)$ and $(iii)$ on a graph with two vertices $i \neq j$. If they are not connected, then both $f_i$ and $f_j$ are constant, hence $\FF_i$ and $\FF_j$ trivially commute. If there is an arrow $i \to j$, then $\FF_i(a_i, a_j) = (f_i(a_j), a_i)$, whereas $\FF_j(a_i, a_j) = (a_i, t)$, since $f_j \equiv t$ is a constant. Then it is easy to check that the compositions $\FF_i \FF_j , \FF_i \FF_j \FF_i, \FF_j \FF_i \FF_j$ coincide, as they map every element to $(f_i(t), t)$.
\end{proof}

These relations are remindful of those in the presentation of a Hecke-Kiselman monoid.
\begin{defi}
Let $\Gamma=(V,E)$ be a finite directed acyclic graph.
The \emph{Hecke-Kiselman monoid} associated with $\Gamma$ is defined as follows
\begin{eqnarray}
\mathbf{HK}_{\Gamma}=\langle a_i, i\in V \,|\, && a_i^2=a_i, \mbox{ for every }i\in V;\nonumber
\\
									&& a_i a_j a_i=a_j a_i a_j = a_i a_j, \mbox{ for }i\rightarrow j;\label{2}\\
                                    && a_i a_j=a_j a_i, \mbox{ for }i \nrightarrow j, \mbox{ and } j\nrightarrow i\label{3}\rangle
\end{eqnarray}
\end{defi}
This structure has been first introduced in \cite{MR2821178} for a finite mixed graph, i.e., a simple graph (without loops or multiple edges) in which edges can be either oriented or unoriented: there, an unoriented edge $(i,j)$ is used to impose the customary braid relation $ a_i a_j a_i=a_j a_i a_j$.

If $\S=(\Gamma, S_i, f_i)$ is an update system on a finite directed acyclic graph $\Gamma=(V, E)$, then Proposition \ref{P: I, II, III} amounts to claiming that the evaluation homomorphism $\FF\colon \W(V) \to \End(S)$ factors through the Hecke-Kiselman monoid $\mathbf{HK}_{\Gamma}$.

Our case of interest is when the graph $\Gamma = \Gamma_n$ is the complete graph on $n$ vertices, where the orientation
is set so that $i\rightarrow j$ if $i< j$. In this case, the semigroup
$\mathbf{HK}_{\Gamma_n}$ coincides with Kiselman's semigroup
$\mathrm{K}_n$, as defined in \cite{MR2561084}.  The monoid $\mathrm{K}_n$, however, only reflects immediate pairwise interactions between vertex functions. One may, in principle, wonder if $\mathrm{K}_n$ is indeed the smallest quotient of $\W(V)$ through which evaluation maps $\FF$ factor, or additional identities may be imposed that reflect higher order interactions.

In this paper we will exhibit an update system $\S_n^\star$, defined on the graph $\Gamma_n$, whose dynamics monoid is isomorphic to $\mathrm{K}_n$. In other words, we will show that $\mathrm{K}_n\to D(\S_n^\star)$ is indeed an isomorphism, once suitable vertex functions have been chosen.

\section{Combinatorial definitions}\label{combo}
Let $\W(A)$ be the free monoid over the alphabet $A$ and denote by $\star$ the empty word. Recall that, for every subset $B\subseteq A$, the submonoid $\langle B\rangle\subseteq \W(A)$ is identified with the free monoid $\W(B)$.

\begin{defi}
Let $w \in \W(A)$. We define
\begin{itemize}
\item \emph{subword of} $w$ to be a substring of consecutive letters of $w$;
\item \emph{quasi-subword of} $w$ to be an ordered substring $u$ of not necessarily consecutive letters of $w$.
\end{itemize}
We will denote the relation of being a quasi-subword by $\qsubw$, so that
\[
v \qsubw w
\]
if and only if $v$ is a quasi-subword of $w$.
\end{defi}
Obviously, every subword is a quasi-subword. Also notice that $v \qsubw w$ and $w \qsubw v$ if and only if $v = w$.
\begin{ex}
Set $w=acaab\in \W(\{a,b,c\})$; then
\begin{itemize}
\item $aab$ is a subword (hence a quasi-subword) of $w$;
\item $aaa$ is a quasi-subword of $w$ which is not a subword;
\item $abc$ is neither a subword nor a quasi-subword of $w$.
\end{itemize}
Trivial examples of subwords of $w$ are the empty word $\star$ and $w$ itself.
\end{ex}
\begin{defi}
Let $w\in \W(A)$. Then
\begin{itemize}
\item if $w$ is non-empty, the \emph{head} of $w$, denoted $\h(w) \in A$, is the leftmost letter in $w$;
\item
if $a\in A$, then the $a$-{\em truncation} $\T_a w\in \W(A)$ of $w$ is the longest (non-empty) suffix of $w$ with head $a$, or the empty word in case $a$ does not occur in $w$.
\end{itemize}
Similarly, if $I \subset A$, we denote by $\T_I w$ the longest (non-empty) suffix of $w$ whose head lies in $I$, or the empty word in case no letter from $I$ occurs in $w$.
\end{defi}
The following observations all have trivial proofs.
\begin{oss}\label{O: w=w'T_i(w)}
\mbox{}
\begin{enumerate}
\item[$(i)$]If $w\in \W(A)$ does not contain any occurrence of $a\in A$, then
\[
\T_a (ww')=\T_a (w'),
\]
for every $w'\in  \W(A)$.
\item[$(ii)$] For every $w\in \W(A)$ and $a\in A$, one may (uniquely) express $w$ as
\[
w=w'\T_a w
\]
where $w'\in \langle A\setminus \{a\}\rangle$.
\item[$(iii)$] If $w\in \W(A)$ contains some occurrence of $a\in A$, then
\[
\T_a (ww')=(\T_a w) w',
\]
for every $w'\in  \W(A)$.
\end{enumerate}
\end{oss}

\begin{oss}\label{O: T_a < T_b o vice versa}
\mbox{}
\begin{enumerate}
\item[$(i)$]If $w\in \W(A),$ and $a,b\in A$, then either $\T_a w$ is a suffix of $\T_b w$, or vice versa.
\item[$(ii)$] If $\T_b w$ is a suffix of $\T_a w$ for all $b\in A$ , then $\h(w) = a$, hence $\T_a w = w$.
\item[$(iii)$] $\T_I w = \T_a w$ for some $a \in I$, and $\T_b w$ is a suffix of $\T_I w$ for every $b \in I$.
\end{enumerate}
\end{oss}

\begin{defi}
Given $I\subseteq A$, the \emph{deletion morphism} is the unique semigroup homomorphism satisfying
\begin{eqnarray*}
\de_I \colon  \W(A) &\to& \W(A\setminus I)\subseteq  \W(A)\\
a_i &\mapsto & \star \mbox{,  for }i\in I\\
a_j &\mapsto & a_j \mbox{, for }j\notin I.
\end{eqnarray*}
It associates with any $w\in \W(A)$ the longest quasi-subword of $w$ containing no occurrence of letters from $I$.
\end{defi}
\begin{oss}\label{O: de}
For every $I,J\subseteq A$,
\[
\de_I\,\de_J=\de_{I\cup J}
\]
\end{oss}

%

\section{Kiselman's semigroup and canonical words over the complete graph}\label{canon}
In this section, we recall results from \cite{MR2821178} and draw some further consequences.
Choose an alphabet $A=\{a_i, 1 \leq i\leq n\}$; Kiselman's semigroup $\mathrm{K}_n$ has the presentation
\begin{equation*}
\mathrm{K}_n=\langle a_i\in A \,|\, a_i^2=a_i \mbox{ for all }i; \,\, a_i a_j a_i=a_j a_i a_j = a_i a_j \mbox{ for }i<j \rangle.
\end{equation*}
In accordance with \cite{MR2561084}, let
\begin{eqnarray*}
\pi \colon \W(A) \to \mathrm{K}_n
\end{eqnarray*}
denote the canonical evaluation epimorphism.

\begin{defi}[\cite{MR2821178}]
Let $w\in\W(A)$. A subword of $w$ of the form $a_i u a_i$,
 where $a_i\in A$ and $u\in \W(A)$, is \emph{special} if $u$ contains both some $a_j,j>i,$ and
some $a_k,k<i$.
\end{defi}
\begin{oss}\label{O: no 1-special}
Notice that a subword $a_i u a_i$ cannot be special if $i=1$ or $i=n$.
\end{oss}

Let us recall the following fact.
\begin{teo}[\cite{MR2561084}]\label{T: Kudryavtseva}
Let $w \in \W(A)$.
The set $\pi^{-1}\pi(w)$ contains a unique element whose only subwords of the form $a_i u a_i$ are special.
\end{teo}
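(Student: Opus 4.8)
The plan is to prove the two halves of the statement --- \emph{existence} and \emph{uniqueness} of a canonical representative (one all of whose subwords $a_iua_i$ are special) in each fibre $\pi^{-1}\pi(w)$ --- by separate arguments: existence by a length-decreasing rewriting procedure, and uniqueness by confluence.

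For existence, I orient the defining relations so as to shorten words. The engine is a pair of absorption lemmas, each proved by a one-line induction on $|v|$ from the relations $a_ia_ja_i=a_ia_j$ and $a_ja_ia_j=a_ia_j$ (for $i<j$): if every letter of $v$ is $>i$ then $a_iva_i=a_iv$, while if every letter of $v$ is $<i$ then $a_iva_i=va_i$. (For instance, in the first case write $v=a_jv'$ with $j>i$ and compute $a_ia_jv'a_i=a_ia_ja_iv'a_i=a_ia_j(a_iv')=a_ia_jv'$.) Now suppose $a_iua_i$ is \emph{not} special. If $u$ contains no letter $<i$, write $u=u'a_is$ with $s$ the maximal suffix of $u$ lying in letters $>i$ (allowing $u'a_i$ empty); the first lemma gives $a_isa_i=a_is$, whence $a_iua_i=a_iu$ and the trailing $a_i$ is deleted. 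The case in which $u$ contains no letter $>i$ is symmetric via the second lemma and deletes the leading $a_i$. Each such step strictly decreases length and preserves the $\pi$-class, so iterating it terminates at a canonical word in $\pi^{-1}\pi(w)$.

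For uniqueness, I note that the reduction above is Noetherian (length strictly decreases), so by Newman's Lemma it suffices to establish \emph{local confluence}: whenever a single word contains two reducible (non-special) blocks $a_iua_i$ and $a_jva_j$, the two resulting words admit a common descendant. I would organize this by the relative position of the blocks: (a) disjoint blocks, whose reductions obviously commute; (b) one block nested strictly inside the other; and (c) blocks overlapping in a shared stretch of letters, including the case $i=j$ with interleaved occurrences. In each configuration the two absorption lemmas together with idempotency should rewrite both outcomes to a common word. A useful auxiliary tool here is that the deletion morphisms descend to $\mathrm{K}_n$: one checks directly that $\de_I$ respects the defining relations, so it induces $\overline{\de_I}\colon\mathrm{K}_n\to\mathrm{K}_{n-|I|}$ with $\overline{\de_I}\,\pi=\pi\,\de_I$. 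In particular the support $\{i:a_i\text{ occurs in }w\}$ is a class invariant (delete all letters but one and map to $\mathrm{K}_1$), whence the maximal and minimal letters present occur exactly once in any canonical word; this both constrains the overlap configurations in case (c) and makes available an alternative induction on $n$ by peeling off the unique top letter.

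The main obstacle is precisely case (c): because the reductions are ``macro'' moves --- deleting a boundary letter of an entire block rather than applying one short relation --- the overlapping configurations must be analyzed with care, and the naive one-directional system built from $a_ia_i\to a_i$, $a_ia_ja_i\to a_ia_j$, $a_ja_ia_j\to a_ia_j$ alone is \emph{not} confluent onto canonical words (e.g.\ $a_1a_3a_2a_1$ admits no short redex yet equals the shorter $a_1a_3a_2$). One must also resist the temptation to induct via truncation, since $\T_a$ is not a class invariant: in $\mathrm{K}_2$ one has $\pi(a_1a_2a_1)=\pi(a_1a_2)$, yet $\T_{a_2}(a_1a_2a_1)=a_2a_1$ and $\T_{a_2}(a_1a_2)=a_2$ are distinct. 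Consequently the crux is to show that the interaction of the unique maximal letter with the remaining word is rigid enough to force the common descendant in the overlapping case; granting this local confluence, Newman's Lemma upgrades it to global confluence, the canonical words are exactly the normal forms, and uniqueness follows.
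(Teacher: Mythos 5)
You are attempting to prove a result that the paper itself does not prove: Theorem \ref{T: Kudryavtseva} is quoted from \cite{MR2561084}, and the surrounding Remark \ref{O: simplifying sequences} merely recalls the three types of simplifying steps and states confluence (``every simplifying sequence for $w$ ends on the same word'') as a \emph{consequence} of the theorem, not as an ingredient of its proof. So there is no in-paper argument to match yours against; what can be judged is whether your proposal is a complete proof. The existence half is: your two absorption lemmas are exactly the type-$\stackrel{2}{\rightarrow}$ and type-$\stackrel{3}{\rightarrow}$ moves of Remark \ref{O: simplifying sequences}, the inductions proving them from the defining relations are correct, and the observation that every non-canonical word carries at least one such macro-redex (so that termination of the length-decreasing rewriting lands on a canonical word in the same $\pi$-class) is sound.

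The uniqueness half, however, is not a proof but a plan with the hard part left open. Your strategy --- termination plus Newman's Lemma, hence it suffices to prove local confluence --- is logically valid, since the symmetrized macro-moves do generate the defining congruence. But local confluence for two overlapping macro-redexes $a_iua_i$ and $a_jva_j$ (your case (c)) is precisely where the entire difficulty of the theorem is concentrated, and you explicitly write ``granting this local confluence'' rather than establishing it. The redexes here are not fixed-length patterns but blocks of unbounded size whose applicability depends on the letters occurring strictly between two occurrences of $a_i$; after one reduction fires, the letter content between the surviving occurrences changes, and a previously available reduction of the other block may disappear or change type, so the ``common descendant'' is genuinely nontrivial to produce. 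You correctly diagnose the pitfalls (the naive three-letter rewriting system is not confluent; $\T_a$ is not a class invariant, so the obvious induction fails), but identifying obstacles is not the same as overcoming them. Your class-invariance of the support via $\de_I$ and the consequent single occurrence of the extremal letters is a correct and useful observation (it is Remark \ref{O: simplifying sequences}$(i)$), yet it only constrains, and does not resolve, the overlapping case. As it stands, the proposal establishes existence of a canonical word in each fibre but not uniqueness, which is the substantive content of the theorem.
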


\begin{oss}\label{O: simplifying sequences}
\mbox{}
\begin{itemize}
\item[$(i)$] The unique element described in Theorem \ref{T: Kudryavtseva} contains at most one occurrence of $a_1$ and at most one occurrence of $a_n$.
\item[$(ii)$] In order to prove Theorem \ref{T: Kudryavtseva}, the authors of \cite{MR2561084} define a binary relation $\to$ in
$\W(A)$ as follows: $w\to v$ if and only if either
\begin{enumerate}
	\item[($\stackrel{1}{\rightarrow}$)] $w=w_1 a_i a_i w_2$, $v=w_1 a_i w_2$, or
	\item[($\stackrel{2}{\rightarrow}$)] $w=w_1 a_i u a_i w_2$, $v=w_1 a_i u w_2$ and $u\in \langle a_{i+1}, \dots, a_n \rangle $, or
    \item[($\stackrel{3}{\rightarrow}$)] $w=w_1 a_i u a_i w_2$, $v=w_1  u a_i w_2$ and $u\in \langle a_1, \dots, a_{i-1}\rangle$.
\end{enumerate}

It is possible to iterate such simplifications and write (finite) sequences
\begin{eqnarray}\label{simplifying sequence}
w\to v_1 \to v_2 \dots \to v_k,
\end{eqnarray}
so that each word contains exactly one letter less than the previous and they all belong to the same fiber 
 with respect to $\pi$.
Moreover, each $v_i$ is a quasi-subword of $w$ and of $v_j$, for all $j<i$.

A sequence like \eqref{simplifying sequence} is called \emph{simplifying sequence} if $v_k$ is not simplifiable any further, and each $v_i \to v_{i+1}$ is called a \emph{simplifying step}. It should be stressed that a simplifying step of type $1$ can be seen as both of type $2$ and of type $3$.

\item[$(iii)$] According to Theorem \ref{T: Kudryavtseva}, every simplifying sequence for $w$ ends on the same word.
\end{itemize}
\end{oss}

We will refer to any word, whose only subwords of the form $a_i u a_i$ are special, as a \emph{canonical word}, so that the above theorem claims existence of a unique canonical word $v$ in each $\pi^{-1}\pi(w), w \in \W(A)$.
Thus, the assignment $w \mapsto v$ is a well defined map $\Can: \W(A) \to \W(A)$ associating with each word its unique \emph{canonical form}. Notice that $w$ is canonical if and only if $w = \Can w$.

\begin{oss}\label{O: Can(a_hu)=a_hCan(u)}
\mbox{}
\begin{enumerate}
	\item[$(i)$] If $w$ is canonical then all of its subwords are canonical.
	\item[$(ii)$] A word $u\in \langle a_i, h \leq i\leq k \rangle$ is canonical if and only if, for every $j<h$ or
$j>k$, the word $a_j u$ is canonical. Moreover, in such cases,
\[
\Can(a_ju)=a_j \Can u.
\]
\end{enumerate}
\end{oss}

\begin{oss}\label{O: forma canonica}
	More consequences of Theorem \ref{T: Kudryavtseva} are that
\begin{itemize}
\item[$(i)$] the canonical form $\Can w$ is the word of minimal length in $\pi^{-1}\pi(w)$;

\item[$(ii)$] due to Remark \ref{O: simplifying sequences}$(iii)$, $\Can w$ is the last element in every simplifying sequence starting from any of the words in  $\pi^{-1}\pi(w)$;

\item[$(iii)$] $\Can w$ is a quasi-subword of all the words in any simplifying
		 sequence beginning from $w$;
	
\item[$(iv)$] if $w$ contains any given letter, so does $\Can w$.
\end{itemize}
\end{oss}

\begin{lem}\label{L: Can(uv)=Can(Can(u)Can(v))}
For every $u,v\in \W(A)$,
\begin{eqnarray*}
\Can(uv)& = & \Can\left((\Can u) v \right)\\
        & = & \Can\left(u \Can v\right)\\
        & = & \Can\left(\Can u \Can v\right)\\
\end{eqnarray*}
\end{lem}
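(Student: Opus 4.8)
The plan is to reduce everything to two facts: that $\pi$ is a monoid homomorphism, and that $\Can$ depends only on the $\pi$-fiber of its argument. The second fact is the conceptual heart, so I would isolate it first. By Theorem~\ref{T: Kudryavtseva}, each fiber $\pi^{-1}\pi(w)$ contains exactly one canonical word, namely $\Can w$; in particular $\Can w \in \pi^{-1}\pi(w)$, so that $\pi(\Can w) = \pi(w)$ for every $w \in \W(A)$. From this I would extract the principle that governs the whole lemma: if $\pi(x) = \pi(y)$, then $\pi^{-1}\pi(x) = \pi^{-1}\pi(y)$, and since these identical fibers have a unique canonical representative, $\Can x = \Can y$. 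In other words, $\Can$ factors through $\pi$.

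Granting this, I would prove all three equalities at once by computing the relevant $\pi$-values. Using multiplicativity of $\pi$ together with $\pi(\Can u) = \pi(u)$ and $\pi(\Can v) = \pi(v)$, one finds
\[
\pi\big((\Can u)\,v\big) = \pi(u)\,\pi(v), \qquad \pi\big(u\,\Can v\big) = \pi(u)\,\pi(v), \qquad \pi\big(\Can u\,\Can v\big) = \pi(u)\,\pi(v),
\]
and of course $\pi(uv) = \pi(u)\,\pi(v)$ as well. Thus the four words $uv$, $(\Can u)v$, $u\,\Can v$ and $\Can u\,\Can v$ all lie in a single fiber, and by the principle above they share the same canonical form, which is precisely the chain of three equalities to be established.

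I do not expect a genuine obstacle here: the statement is really a formal consequence of $\Can = \sigma \circ \pi$ for the section $\sigma$ picking out canonical representatives. The one point that deserves care is logical direction: one must resist manipulating $\Can$ symbolically (as if it distributed over concatenation) and instead always pass through $\pi$, invoking uniqueness of the canonical form in each fiber to transfer equalities back. Once this discipline is observed, nothing beyond the multiplicativity of $\pi$ is needed.
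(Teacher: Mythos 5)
Your argument is correct and is exactly the paper's proof, merely spelled out in full: the paper's entire justification is the one-line observation that $\Can$ is constant on fibres of $\pi$, and your expansion via multiplicativity of $\pi$ and $\pi(\Can w)=\pi(w)$ is the intended reasoning. No further comment is needed.
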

\begin{proof}
$\Can$ is constant on fibres of $\pi$.
\end{proof}

\begin{defi}
If $I\subseteq A$, then
\[
\Can_I w=\Can \de_{A\setminus I}w,
\]
where $w\in \W(A)$. In particular $\Can_A w = \Can w$.
\end{defi}

Before proceeding further, we need to make an important observation. Say we have a sequence of steps leading from a word $y$ to a word $x$ and that each step removes a single letter. The same procedure can be applied to every subword of $y$, and again at each step (at most) a single letter is removed, eventually yielding a subword of $x$. Every subword of $x$ is obtained in this way from some (possibly non-unique) subword of $y$. Notice that in the cases we will deal with, some of the steps could be simplifications of type $\stackrel{1}{\rightarrow}$ for which there is an ambiguity on which of the two identical letters is to be removed.

We will say that a subword of $x$ of the form $a_i u a_i$ {\em originates} from a subword $w$ of $y$ if $w$ yields $a_i u a_i$ under the sequence of simplifications and $w$ is of the form $a_i v a_i$.
Let us clarify things with an example. In the following sequence of steps, we have highlighted the letter to remove at each step:
$$bdbc\,\widehat{d}\,abcdc \to b\,\widehat{d}\,bcabcdc \to \widehat{bb}\,cabcdc \to bcabcdc.$$
Notice that in the last step, there is an ambiguity on which letter is being removed.
Then the subwords $bdbcdab, dbcdab, bcdab$ of $bdbcdabcdc$ all yield $bcab$; however, $bcab$ originates only from $bdbcdab$ and $bcdab$.

\vspace{.2cm}
Henceforth, we will shorten the notation and denote by $\de_i, \T_i, \dots$ the maps $\de_{a_i}, \T_{a_i}, \dots$.

\begin{lem}\label{L: solo sottoparole di tipo aba}
Assume $y=\Can y$ and let $x=\de_{\{1,\dots, k-1\}}y$, where $1\leq k \leq n$.
Then any subword $a_i u a_i\qsubw x$ is either special or satisfies $u\in \langle a_i, \dots, a_n\rangle$.
\end{lem}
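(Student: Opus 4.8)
I need to prove that for a canonical word $y = \Can y$ and $x = \de_{\{1,\dots,k-1\}}y$ (the word obtained by deleting all letters $a_1,\dots,a_{k-1}$), every subword of $x$ of the form $a_i u a_i$ is either special or has $u \in \langle a_i,\dots,a_n\rangle$. Let me think about what's going on. The word $x$ lives in $\langle a_k,\dots,a_n\rangle$, so any repeated letter $a_i$ in $x$ has $i \geq k$. A subword $a_i u a_i$ of $x$ fails to be special precisely when $u$ contains no letter $a_j$ with $j>i$, or contains no letter $a_j$ with $j<i$. Since all letters of $u$ have index $\geq k$, the claim to be proved is: if $a_i u a_i$ is \emph{not} special, then the failure can only be on the ``low'' side, i.e. $u$ contains no $a_j$ with $j<i$, giving $u \in \langle a_i,\dots,a_n\rangle$.

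**Main strategy.** The natural approach is by contradiction, lifting the offending subword back to $y$. Let me think about how. Suppose $a_i u a_i \qsubw x$ is non-special with $u$ containing some $a_j$, $j<i$ (necessarily $k \leq j < i$). Then, by definition of non-special, $u$ contains \emph{no} $a_\ell$ with $\ell > i$, so $u \in \langle a_k,\dots,a_{i-1}, a_i\rangle$ and in fact $a_i u a_i \in \langle a_k,\dots,a_i\rangle$ with $u$ actually using some index strictly below $i$. My plan is to use the ``origination'' machinery introduced just before the lemma: since $x = \de_{\{1,\dots,k-1\}}y$ is obtained from $y$ by a sequence of single-letter deletions (each deletion step of the deletion morphism removes one occurrence of some $a_j$, $j<k$), the subword $a_i u a_i$ of $x$ originates from some subword of $y$ of the form $a_i v a_i$, where $v$ maps to $u$ under the same deletions. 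Because the only letters being deleted have index $< k \leq i$, the letter $a_i$ is never touched, and $v$ is obtained from $u$ by inserting additional letters of index $< k < i$. The key point is that this insertion can only add \emph{low-index} letters, so $v$ still contains no $a_\ell$ with $\ell>i$, while it does contain $a_j$ (with $j<i$). Hence $a_i v a_i$ is a subword of $y$ that is \emph{not} special either — it has a letter below $i$ but none above $i$. This contradicts $y = \Can y$, which by definition has only special subwords of the form $a_i(\cdot)a_i$.

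**The delicate point.** The step that requires genuine care is the origination argument: I must be sure that a repeated-letter subword $a_i u a_i$ of $x$ genuinely lifts to a repeated-letter subword $a_i v a_i$ of $y$ with the two occurrences of $a_i$ coming from two \emph{distinct} occurrences of $a_i$ in $y$, and with $v$ lying between them. This is exactly what the paragraph preceding the lemma sets up: ``Every subword of $x$ is obtained in this way from some subword of $y$,'' and a subword $a_i u a_i$ of $x$ originates from a subword $a_i v a_i$ of $y$. Since deletion of letters $a_1,\dots,a_{k-1}$ never identifies or removes the letter $a_i$ (as $i \geq k$), there is no type-$\stackrel{1}{\rightarrow}$ ambiguity affecting the $a_i$'s, and the two boundary $a_i$'s of the subword in $x$ come from two honest distinct $a_i$'s in $y$. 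I would state this carefully, noting that $v$ is a quasi-subword of $y$ lying strictly between these two $a_i$ occurrences and that $\de_{\{1,\dots,k-1\}}v = u$, so $v$ and $u$ have the same letters of index $\geq k$; in particular $v$ contains a letter of index $j<i$ (namely whatever $u$ had) and no letter of index $>i$.

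**Assembling the contradiction.** With the lifted subword in hand, the conclusion is immediate: $a_i v a_i$ is a genuine subword of $y = \Can y$, yet it is not special, since its interior $v$ contains some $a_j$ with $k \leq j < i$ but no $a_\ell$ with $\ell > i$. This violates the defining property of canonical words. Therefore the assumption was false, and every non-special subword $a_i u a_i$ of $x$ must have $u \in \langle a_i,\dots,a_n\rangle$, completing the proof. I expect the write-up to be short once the origination lemma is invoked; the only real obstacle is phrasing the lift precisely enough that the ``no high-index letter is created or destroyed'' observation is transparent.
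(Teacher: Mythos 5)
Your proposal is correct and follows essentially the same route as the paper: lift the subword $a_i u a_i$ of $x$ to an originating subword $a_i v a_i$ of $y$, note that deleting only letters of index $<k\leq i$ cannot create or destroy occurrences of letters of index $>i$ between the two $a_i$'s, and derive a contradiction with the canonicity of $y$. The paper phrases this as ruling out the case $u\in\langle a_j,\ j\leq i\rangle$ rather than assuming a low-index letter in $u$, but the argument is the same.
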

\begin{proof}
Let $a_i u a_i$ be a subword of $x$. Then $i\geq k$, as $x = \de_{\{1,\dots, k-1\}} y$ contains no $a_i, i < k$.

The above subword $a_i u a_i$ originates from a subword $a_i v a_i$ of $y$. If $a_i u a_i$ is not special, then either $u \in \langle a_j, j \leq i\rangle$, or $u \in \langle a_j, j \geq i\rangle$. However, the former case does not occur, otherwise $a_i v a_i$ would fail to be special, as $x$ is obtained from $y$ by only removing letters $a_j, 1 \leq j < k$, and $k \leq i$. This is a contradiction, as $y$ is canonical.
\end{proof}

\begin{lem}\label{L: sempl-solo-a-dx}
Assume $y=\Can y$ and let $x=\de_{\{1,\dots, k-1\}}y$, where $1\leq k \leq n$.
Then any simplifying sequence
\[
x=\de_{\{1,\dots, k-1\}}y \rightarrow \dots \rightarrow\Can x=\Can_{\{k,\dots, n\}}y
\]
is such that all simplifying steps are of type $\stackrel{2}{\rightarrow}$ (possibly of type $\stackrel{1}{\rightarrow}$).
\end{lem}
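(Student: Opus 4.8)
The plan is to establish the contrapositive-flavoured claim that no step in the sequence can be genuinely of type $\stackrel{3}{\rightarrow}$ (i.e.\ of type $3$ but not of type $1$), after which every step is automatically of type $\stackrel{2}{\rightarrow}$ (possibly $\stackrel{1}{\rightarrow}$). To control the whole sequence I would work with the following property of a word $v$, call it $(\ast)$: \emph{every subword of $v$ of the form $a_i u a_i$ is either special or satisfies $u\in\langle a_i,\dots,a_n\rangle$}. This is exactly the conclusion of Lemma \ref{L: solo sottoparole di tipo aba} applied to the starting word $x=\de_{\{1,\dots,k-1\}}y$, so $(\ast)$ holds at the beginning. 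The key observation is that $(\ast)$ forbids type-$3$ steps: a genuine type-$3$ step acts on a subword $a_iua_i$ with $u\in\langle a_1,\dots,a_{i-1}\rangle$ nonempty, so $u$ has a letter below $i$ and none above $i$; such a subword is neither special nor satisfies $u\in\langle a_i,\dots,a_n\rangle$, contradicting $(\ast)$. Hence it suffices to prove that $(\ast)$ propagates along the simplifying sequence, which I would do by induction, using the observation on origination of subwords.

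For the inductive step, assume $v$ satisfies $(\ast)$ and $v\to v'$ is a type-$2$ step that removes the second $a_\ell$ of a subword $a_\ell p a_\ell$ with $p\in\langle a_{\ell+1},\dots,a_n\rangle$ (the type-$1$ case being the degenerate $p=\star$). By the origination observation applied to this single step, every subword $a_iua_i$ of $v'$ comes from a subword $a_iwa_i$ of $v$, where $u$ differs from $w$ by at most the deletion of the removed $a_\ell$; by $(\ast)$ for $v$, either $a_iwa_i$ is special or $w\in\langle a_i,\dots,a_n\rangle$. If the deleted $a_\ell$ does not lie between the two bounding $a_i$'s, then $u=w$ and $(\ast)$ holds for $a_iua_i$ directly; and if $w\in\langle a_i,\dots,a_n\rangle$, then $\ell\ge i$ and deleting $a_\ell$ leaves $u\in\langle a_i,\dots,a_n\rangle$. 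The only remaining possibility is that $a_iwa_i$ is special and the deleted $a_\ell$ sits inside $w$.

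Here lies the main obstacle, which I expect to resolve by a positional argument. If $\ell\le i$, the letters of $w$ exceeding $i$ are untouched, so $u$ still has one; if $u$ also keeps a letter below $i$ it is special, and otherwise $a_\ell$ was the unique such letter, whence $u\in\langle a_i,\dots,a_n\rangle$. If $\ell>i$, I claim that \emph{both} occurrences of $a_\ell$ lie between the two bounding $a_i$'s of $w$: since $p$ contains no letter $\le\ell$, the left-hand $a_i$ (with $i<\ell$) cannot sit strictly between the two $a_\ell$'s; as it lies to the left of the removed second $a_\ell$, it must lie to the left of the first $a_\ell$ as well, so both copies of $a_\ell$ fall inside $w$. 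Consequently $w$ carries at least two letters exceeding $i$, and deleting one leaves $u$ with a letter above $i$, while (because $\ell>i$) the letter of $w$ below $i$ survives in $u$; thus $a_iua_i$ is still special. In every case $(\ast)$ passes to $v'$, closing the induction and showing that all steps are of type $\stackrel{2}{\rightarrow}$, possibly $\stackrel{1}{\rightarrow}$.
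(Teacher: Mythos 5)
Your proof is correct and follows essentially the same route as the paper's: both start from Lemma \ref{L: solo sottoparole di tipo aba}, use the origination of a subword $a_i u a_i$ under a deletion step, and hinge on the same positional fact that a type-$\stackrel{2}{\rightarrow}$ deletion of a letter lying between two occurrences of $a_i$ either removes a letter $a_\ell$ with $\ell\le i$ or removes one of two copies of $a_\ell$ ($\ell>i$) both trapped between the bounding $a_i$'s, so it can never delete the last letter exceeding $i$ and destroy speciality. The only difference is packaging: you propagate the invariant $(\ast)$ one step at a time, whereas the paper traces a hypothetical type-$\stackrel{3}{\rightarrow}$-eligible subword back to $x$ and derives the contradiction in one stroke.
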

\begin{proof}
By Lemma \ref{L: solo sottoparole di tipo aba}, any subword $a_i u a_i\qsubw x$ is either special or satisfies $u\in \langle a_j, i \leq j \leq n\rangle$, hence the first simplifying step is of type $\stackrel{2}{\rightarrow}$ (and possibly of type $\stackrel{1}{\rightarrow}$).

We want to show that, starting from $x=\de_{\{1,\dots, k-1\}}y$ and applying simplifications of type $\stackrel{2}{\rightarrow}$ or of type $\stackrel{1}{\rightarrow}$, one can never obtain a word admitting a subword $a_i u a_i, u \neq \star,$ on which it is possible to apply a step of type $\stackrel{3}{\rightarrow}$ (which is not of type $\stackrel{1}{\rightarrow}$). 

Assume therefore by contradiction that $a_i u a_i$, where $\star \neq u\in \langle a_j, k \leq j < i\rangle$, occurs as a subword after having performed some simplifying steps of type $\stackrel{2}{\rightarrow}$ on $x$. The subword $a_i u a_i$ originates from a subword $a_i v a_i$ of $x$, which is necessarily special, since $v$ cannot lie in $\langle a_j, i \leq j \leq n\rangle$, as some $a_j, j<i,$ occurs in $u \neq \star$, and $u$ is obtained from $v$ after removing some letters. This shows that letters $a_j, j>i,$ must occur in $v$, whereas they do not occur in $u$.

However, a simplifying step of type $\stackrel{2}{\rightarrow}$ that removes a letter from between the two occurrences of $a_i$ either occurs completely between them, or begins on the left of both. In the former case, it does not change the speciality of the subword, whereas in the latter case a simplification of type $\stackrel{2}{\rightarrow}$ can only remove a letter $a_j, j <i$, due to the presence of the left $a_i$; we thus obtain a contradiction.
\end{proof}
\begin{cor}\label{startsbyu}
If $u a_1 v$ is canonical, then $\Can(uv)$ admits $u$ as a prefix.
\end{cor}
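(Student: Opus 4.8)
The plan is to deduce this directly from Lemma~\ref{L: sempl-solo-a-dx}, applied with $y = ua_1v$ and $k = 2$. The guiding idea is that, after deleting the minimal letter $a_1$, the word $uv$ can only be simplified by \emph{right-hand} cancellations, and such cancellations can never erode a canonical prefix.

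First I would record two preliminary facts. Since $ua_1v = \Can(ua_1v)$ already exhibits one occurrence of $a_1$ (the displayed middle one), Remark~\ref{O: simplifying sequences}$(i)$ forces it to be the \emph{only} one; equivalently, an extra $a_1$ inside $u$ or $v$ would pair with the middle $a_1$ to form a subword $a_1 w a_1$, which can never be special by Remark~\ref{O: no 1-special}, contradicting canonicity. Hence $u,v \in \langle a_2,\dots,a_n\rangle$ and $\de_{\{1\}}(ua_1v) = uv$. Moreover, $u$ is a subword of the canonical word $ua_1v$, so it is itself canonical by Remark~\ref{O: Can(a_hu)=a_hCan(u)}$(i)$.

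Now set $y = ua_1v$ and $k = 2$, so that $x = \de_{\{1\}}y = uv$ and $\Can x = \Can(uv)$. By Lemma~\ref{L: sempl-solo-a-dx}, every simplifying sequence
\[
uv = x_0 \rightarrow x_1 \rightarrow \dots \rightarrow x_m = \Can(uv)
\]
uses only steps of type $\stackrel{2}{\rightarrow}$ (possibly of type $\stackrel{1}{\rightarrow}$). Fixing one such sequence, I would show by induction on $t$ that $u$ is a prefix of $x_t$; the base case $x_0 = uv$ is immediate. For the inductive step, a step of type $\stackrel{2}{\rightarrow}$ rewrites $x_t = w_1 a_i u' a_i w_2$ as $x_{t+1} = w_1 a_i u' w_2$ with $u' \in \langle a_{i+1},\dots,a_n\rangle$, deleting the \emph{right} occurrence of $a_i$. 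Were that occurrence inside the prefix $u$, the left occurrence preceding it would be inside $u$ as well, making $a_i u' a_i$ a subword of the canonical word $u$; but $a_i u' a_i$ is not special, since $u'$ contains no letter $a_k$ with $k < i$ --- a contradiction. Thus the deleted letter lies strictly to the right of $u$, the first $|u|$ letters of $x_{t+1}$ agree with those of $x_t$, and $u$ survives as a prefix. (The type-$\stackrel{1}{\rightarrow}$ case $a_i a_i \rightarrow a_i$ is the same: the two copies cannot both lie in $u$, so the prefix is untouched.) Taking $t = m$ gives the claim.

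The essential point --- and the only real obstacle --- is the reduction to Lemma~\ref{L: sempl-solo-a-dx}. The feature that could genuinely destroy the prefix is a step of type $\stackrel{3}{\rightarrow}$, which deletes a \emph{left} occurrence of some $a_i$ whose matching right occurrence may sit in $v$, thereby biting into $u$. Lemma~\ref{L: sempl-solo-a-dx} guarantees that no such step arises once $a_1$ has been removed from a canonical word, and this is exactly what makes the corollary fall out; the prefix bookkeeping in the induction is then routine.
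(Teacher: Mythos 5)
Your proof is correct and follows the same route as the paper: apply Lemma~\ref{L: sempl-solo-a-dx} with $y = ua_1v$ and $k=2$ to see that every simplifying sequence from $uv$ to $\Can(uv)$ uses only steps of type $\stackrel{2}{\rightarrow}$, which cannot delete a letter of the canonical prefix $u$. The paper states the last point in one line where you spell out the induction, but the argument is the same.
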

\begin{proof}
By Lemma \ref{L: sempl-solo-a-dx}, all simplifying sequences from $uv = \de_1 (u a_1 v)$ to $\Can(uv)$ only contain steps of type $\stackrel{2}{\rightarrow}$ and no such simplifying step does alter $u$.
\end{proof}
\begin{prop}\label{P: u a_1 Can[k,n]v}
Assume that $u a_j v$ is canonical, and that $u \in \langle a_i, k \leq i \leq n\rangle$, where $k > j$. Then $u a_j \Can_{\{k, \dots, n\}} v$ is also canonical.

In particular, if $u a_1 v$ is canonical, then $u a_1 \Can_{\{k,\dots, n\}}v$ is also canonical.
\end{prop}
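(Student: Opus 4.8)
The plan is to verify directly that the word $u a_j w$, where $w = \Can_{\{k,\dots,n\}} v = \Can\,\de_{\{1,\dots,k-1\}}v$, is canonical, i.e.\ that each of its subwords $a_i \sigma a_i$ is special. First I would record the structural facts that keep the bookkeeping under control. Since $u a_j v$ is canonical, so are all of its subwords (Remark \ref{O: Can(a_hu)=a_hCan(u)}$(i)$); in particular $u$ and $v$ are canonical, and so is $w = \Can\,\de_{\{1,\dots,k-1\}}v$. As $u \in \langle a_i : k \leq i \leq n\rangle$ and $j < k$, the letter $a_j$ occurs neither in $u$ nor in $w$ (the deletion kills it and $\Can$ creates no new letters, Remark \ref{O: forma canonica}$(iv)$), so $u a_j w$ contains exactly one $a_j$; moreover $w \in \langle a_i : k \leq i \leq n\rangle$.

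With these in hand I would split the subwords $a_i \sigma a_i$ of $u a_j w$ according to where the two copies of $a_i$ sit. If both lie in $u$, or both lie in $w$, speciality is immediate from canonicity of $u$ and $w$; the single occurrence of $a_j$ excludes any subword with an endpoint on $a_j$. The only genuinely new configuration is a straddling subword with the left $a_i$ in $u$ and the right $a_i$ in $w$, where necessarily $i \geq k > j$. Such $\sigma$ always contains the unique $a_j$, hence a letter smaller than $a_i$, so speciality reduces to exhibiting some $a_m$ with $m > i$ inside $\sigma$. It suffices to treat the closest pair — the rightmost $a_i$ of $u$, at position $p$, and the leftmost $a_i$ of $w$, at position $q$ — since every wider pair produces a larger $\sigma$.

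To manufacture this large letter I would return to the canonical word $u a_j v$. The letter $a_i$ occurs in $v$ (it occurs in $w$, and $\de$ and $\Can$ preserve the set of letters), and the subword of $u a_j v$ from the rightmost $a_i$ of $u$ to the leftmost $a_i$ of $v$ is an $a_i \Sigma a_i$ which, by canonicity of $u a_j v$, must be special; as $\Sigma$ contains $a_j$, it must contain some $a_m$ with $m > i$. If this $a_m$ lies in the tail of $u$ after position $p$, it sits in the very same place of $u a_j w$ (between $p$ and $q$), and we are done. The remaining, delicate case is when the only such $a_m$ lies in the prefix of $v$ preceding its first $a_i$: then I must show that a letter exceeding $a_i$ survives, still to the left of the first $a_i$, through the passage from $v$ to $\de_{\{1,\dots,k-1\}}v$ and then to $w = \Can\,\de_{\{1,\dots,k-1\}}v$.

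This survival is the crux. The deletion is harmless: since $m > i \geq k$, the letter $a_m$ is not removed, and its position relative to the (also undeleted) first $a_i$ is unchanged, so $\de_{\{1,\dots,k-1\}}v$ still has a letter exceeding $a_i$ before its first $a_i$. For the passage to $w$ I would invoke Lemma \ref{L: sempl-solo-a-dx}: as $v$ is canonical, every simplifying step from $\de_{\{1,\dots,k-1\}}v$ to $w$ is of type $\stackrel{2}{\rightarrow}$. I then propose to track the predicate ``some letter $a_m$, $m > i$, occurs strictly before the first $a_i$'' and show it is preserved by each such step. A type-$\stackrel{2}{\rightarrow}$ step deletes the right member of a pair $a_l \tau a_l$ with $\tau \in \langle a_{l+1}, \dots, a_n\rangle$. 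The first $a_i$ is never this deleted letter (being a leftmost copy, it cannot be a right member), so it stays fixed and the prefix preceding it only loses letters. If the deleted letter lies at or beyond the first $a_i$, that prefix is untouched; if it lies strictly before the first $a_i$, then the whole block $a_l \tau a_l$ does, and its surviving left copy $a_l$ — still before the first $a_i$ — is a witness when $l > i$, while no witness is touched at all when $l \le i$. Hence a witness $a_m$, $m > i$, persists before the first $a_i$ at every step, so $w$ inherits one; this places the required large letter inside $\sigma$. I expect this invariance argument to be the main obstacle, the rest being routine bookkeeping. Finally, the last assertion is the case $j = 1$.
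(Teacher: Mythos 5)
Your proof is correct and follows essentially the same route as the paper's: both reduce to the straddling subwords $a_i\sigma a_i$ with one endpoint in $u$ and the other in $\Can_{\{k,\dots,n\}}v$, invoke Lemma \ref{L: sempl-solo-a-dx} to see that only type-$\stackrel{2}{\rightarrow}$ steps occur to the right of $a_j$, and check that such steps cannot destroy the large letter guaranteed by speciality of the corresponding subword of $u a_j v$. Your explicit invariant (``a letter exceeding $a_i$ persists before the first $a_i$'') is just a spelled-out justification of the paper's one-line claim that these simplifications do not change the set of letters lying between the two occurrences of $a_i$.
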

\begin{proof}
We prove the latter claim, as the proof of the former more general statement is completely analogous.

We know that $v$ is canonical. Then Lemma \ref{L: sempl-solo-a-dx} shows that $u a_1 \de_{\{1,\dots, k-1\}}v$ can be simplified into $u a_1 \Can_{\{k,\dots, n\}}v$ by only using steps of type $\stackrel{2}{\rightarrow}$ on the right of $a_1$.

If $u a_1 \Can_{\{k,\dots, n\}} v$ is not canonical, then we may find a subword $a_i x a_i$ that is not special. As both $u$ and $\Can_{\{k,\dots, n\}} v$ are canonical, then $a_1$ must occur in $x$. Say that $a_i x a_i$ originates from the subword $a_i y a_i$ of $u a_1 v$. No simplification of type $\stackrel{2}{\rightarrow}$, when performed on the right of $a_1$, can change the set of letters that appear between the two $a_i$. This yields a contradiction, as $a_i y a_i$ is special, whereas $a_i x a_i$ is not.
\end{proof}

\begin{cor}\label{febbre}
For every choice of $u, u', v, v' \in \langle a_3, a_4, \dots, a_n\rangle$,
\begin{itemize}
\item[$(i)$]
if $u a_1 v a_2 v'$ is canonical, then $u a_1 \Can(v v')$ is canonical;
\item[$(ii)$]
if $u a_2 v a_1 v'$ is canonical, then $u a_2 \Can(v v')$ is canonical;
\item[$(iii)$]
if $u a_2 u' a_1 v a_2 v'$ is canonical, then both $u a_2 u' a_1 \Can(v v')$ and $u a_2 \Can(u' v v')$ are canonical.
\end{itemize}
\end{cor}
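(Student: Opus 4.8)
The plan is to dispatch parts $(i)$, $(ii)$, and the second assertion of $(iii)$ as one-line consequences of Proposition \ref{P: u a_1 Can[k,n]v}, and to reserve the real work for the first assertion of $(iii)$.

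For $(i)$, since $u\in\langle a_3,\dots,a_n\rangle$, I would apply the ``in particular'' ($j=1$) form of Proposition \ref{P: u a_1 Can[k,n]v} with $k=3$ to the canonical word $u\,a_1\,(v a_2 v')$, obtaining that $u\,a_1\,\Can_{\{3,\dots,n\}}(v a_2 v')$ is canonical; as $v,v'\in\langle a_3,\dots,a_n\rangle$ one has $\de_{\{1,2\}}(v a_2 v')=vv'$, so $\Can_{\{3,\dots,n\}}(v a_2 v')=\Can(vv')$. Part $(ii)$ is identical, using instead the general form with $j=2$, $k=3$, and prefix $u$, together with $\de_{\{1,2\}}(v a_1 v')=vv'$. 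The word $u\,a_2\,\Can(u'vv')$ of $(iii)$ arises the same way: apply the Proposition with $j=2$, $k=3$, prefix $u$, and suffix $u' a_1 v a_2 v'$, noting $\de_{\{1,2\}}(u' a_1 v a_2 v')=u'vv'$.

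The delicate point is that $W:=u\,a_2\,u'\,a_1\,\Can(vv')$ is canonical; here Proposition \ref{P: u a_1 Can[k,n]v} does not apply directly, because the natural prefix $u\,a_2\,u'$ contains $a_2$ and hence fails to lie in $\langle a_3,\dots,a_n\rangle$. Set $w:=u\,a_2\,u'\,a_1\,v\,a_2\,v'$ (the given canonical word) and $C:=\Can(vv')$. I would establish canonicity of $W$ by checking that every gap between two consecutive occurrences of a letter $a_i$ is special; this suffices, since speciality of a minimal such gap forces speciality of every longer $a_i(\cdots)a_i$ subword containing it. As $W$ has a single $a_1$ and a single $a_2$, only letters $a_i$ with $i\geq 3$ produce gaps, and their two occurrences lie among $u$, $u'$, $C$. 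Gaps internal to $u$, $u'$, or $C$ are special, each of these being canonical (the first two as subwords of $w$, the third by definition); a gap from $u$ to $u'$ lies inside the subword $u\,a_2\,u'$ of $w$; and a gap whose left endpoint is in $u'$ and whose right endpoint is the first $a_i$ of $C$ lies inside $u'\,a_1\,C=u'\,a_1\,\Can(vv')$, which is canonical by part $(i)$ applied to the word $u'\,a_1\,v\,a_2\,v'$ (canonical, being a suffix of $w$).

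The one genuinely hard case — and the main obstacle — is a gap whose left endpoint is the last $a_i$ of $u$, with $a_i$ absent from $u'$, and whose right endpoint is the first $a_i$ of $C$; the enclosed block $p=(\text{tail of }u)\,a_2\,u'\,a_1\,(\text{prefix of }C\text{ before its first }a_i)$ contains $a_1$, hence some letter $<i$, so I must locate some $a_m$, $m>i$, inside $p$. I would argue by contradiction: if no such $a_m$ occurs, then the tail of $u$ and $u'$ contain no letter $>i$, so the analogous gap in the canonical word $w$ — from the same $a_i\in u$ to the first $a_i$ of $v a_2 v'$ — can acquire its speciality witness $a_m$ ($m>i$) only inside $vv'$, before the first $a_i$ of $vv'$. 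It then remains to show this witness survives into $C$, i.e.\ that $C$ also has a letter $>i$ before its first $a_i$. This is the crux: since $v a_2 v'$ is canonical, Lemma \ref{L: sempl-solo-a-dx} forces every simplifying sequence from $vv'=\de_{\{1,2\}}(v a_2 v')$ to $C$ to consist of $\stackrel{2}{\rightarrow}$ steps only, and a short case check (according as the deleted letter $a_j$ has $j<i$, $j=i$, or $j>i$) shows that a single $\stackrel{2}{\rightarrow}$ step never destroys the property ``some letter $>i$ occurs before the first $a_i$''. Hence $C$ retains such a letter, it lies in the $C$-portion of $p$, and $p$ is special, contradicting the assumption. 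This completes the verification that $W$ is canonical, and with it part $(iii)$.
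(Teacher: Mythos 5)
Your proof is correct, but it takes a genuinely different route from the paper's in parts $(ii)$ and $(iii)$. The paper proves $(i)$ exactly as you do; for $(ii)$ it does not invoke the general form of Proposition \ref{P: u a_1 Can[k,n]v} but instead argues that $(ii)$ is \emph{equivalent} to $(i)$ via the symmetry swapping $a_1$ and $a_2$ (each occurring once, and both smaller than every other letter, so speciality and simplifying sequences are preserved under the swap) --- your direct application with $j=2$, $k=3$ is at least as clean and avoids having to justify that symmetry. The real divergence is in the first assertion of $(iii)$: the paper simply applies the ``in particular'' clause of Proposition \ref{P: u a_1 Can[k,n]v} with prefix $u a_2 u'$ and $k=3$, and then obtains the second assertion by feeding the result into $(ii)$. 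You judged this application illegitimate because $u a_2 u'\notin\langle a_3,\dots,a_n\rangle$; that is a defensible strict reading of the statement, but note that the paper's proof of the ``in particular'' clause uses no hypothesis on the prefix beyond canonicity of $u a_1 v$ (the non-special subword must straddle the unique $a_1$, and type-$\stackrel{2}{\rightarrow}$ steps to the right of $a_1$ cannot change the set of letters between the two offending occurrences), so the clause really is available with an arbitrary prefix and the paper's one-line proof goes through. Your replacement argument --- reducing canonicity to speciality of gaps between consecutive equal letters, disposing of all but one configuration by canonicity of subwords of $w$ and of $u' a_1\Can(vv')$, and handling the $u$-to-$C$ gap by showing that type-$\stackrel{2}{\rightarrow}$ steps preserve the property ``some letter $>i$ occurs before the first $a_i$'' --- is sound, but it amounts to re-proving the relevant case of Proposition \ref{P: u a_1 Can[k,n]v} from scratch; what it buys you is independence from the ambiguously stated hypothesis, at the cost of considerable extra length.
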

\begin{proof}
$(i)$ follows directly from Proposition \ref{P: u a_1 Can[k,n]v}, whereas $(iii)$ follows by applying Proposition \ref{P: u a_1 Can[k,n]v} and then $(ii)$.

However, $(ii)$ is equivalent to $(i)$, as both $u a_1 v a_2 v'$ and $u a_2 v a_1 v'$ contain single occurrences of $a_1$ and $a_2$, and every simplifying sequence for the former can be turned into a simplifying sequence for the latter by switching $a_1$ with $a_2$.
\end{proof}

\begin{lem}\label{L: can-trunc}
Let $w\in \W(A)$, $1 \leq i \leq n$. Then
\[
\T_i\Can_{\{i,\dots, n\}}w=\Can_{\{i,\dots, n\}}\T_i w .
\]
\end{lem}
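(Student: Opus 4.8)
The plan is to reduce immediately to the case $i=1$ and then carry out a structural analysis of simplifying sequences around the unique surviving occurrence of $a_1$. For the reduction, note that $\Can_{\{i,\dots,n\}} = \Can\,\de_{\{1,\dots,i-1\}}$ and that $\T_i$ commutes with $\de_{\{1,\dots,i-1\}}$: deleting letters other than $a_i$ changes neither the position of the first $a_i$ nor the relative order of the surviving letters. Hence both sides of the identity can be rewritten in terms of $w''=\de_{\{1,\dots,i-1\}}w\in\langle a_i,\dots,a_n\rangle$, as $\T_i\Can w''$ and $\Can\T_i w''$. Since no simplifying step introduces new letters (Remark \ref{O: simplifying sequences}), the whole computation stays inside the subalphabet $\{a_i,\dots,a_n\}$, which we relabel order-isomorphically onto $\{a_1,\dots,a_{n-i+1}\}$; it therefore suffices to prove $\T_1\Can w=\Can\T_1 w$ for arbitrary $w$.

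If $a_1$ does not occur in $w$, then $\T_1 w=\star$ and, since $\Can w$ is a quasi-subword of $w$, it contains no $a_1$ either, so both sides are $\star$. Assume then that $a_1$ occurs in $w$. First I would establish that $\Can\T_1 w=a_1 s$ with $s\in\langle a_2,\dots,a_n\rangle$ canonical. The word $\T_1 w$ has head $a_1$, and inspecting the step types of Remark \ref{O: simplifying sequences}$(ii)$ shows the leading $a_1$ is never truly removed: a type $\stackrel{2}{\rightarrow}$ step on a front pair $a_1 u a_1$ deletes the \emph{right} $a_1$, a type $\stackrel{3}{\rightarrow}$ step on $a_1$ forces $u=\star$ (hence is of type $\stackrel{1}{\rightarrow}$ and preserves a front $a_1$), and the front letter can never be the right element of a pair. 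Thus every word in a simplifying sequence from $\T_1 w$ still begins with $a_1$, so $\h(\Can\T_1 w)=a_1$; by Remark \ref{O: simplifying sequences}$(i)$ together with Remark \ref{O: forma canonica}$(iv)$ there is exactly one $a_1$, whence $\Can\T_1 w=a_1 s$ with $s\in\langle a_2,\dots,a_n\rangle$, and $s$ is canonical as a subword of a canonical word (Remark \ref{O: Can(a_hu)=a_hCan(u)}$(i)$).

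Next, writing $w=w'\,\T_1 w$ with $w'\in\langle a_2,\dots,a_n\rangle$ (Remark \ref{O: w=w'T_i(w)}$(ii)$) and applying Lemma \ref{L: Can(uv)=Can(Can(u)Can(v))}, I obtain $\Can w=\Can\big(w'\Can\T_1 w\big)=\Can(w' a_1 s)$. The core of the argument is then to show $\T_1\Can(w' a_1 s)=a_1 s$. The word $w' a_1 s$ contains a single $a_1$, so $a_1$ is never the repeated letter of any step and is never removed. I would maintain, by induction along a simplifying sequence, the invariant that the portion to the right of $a_1$ remains equal to $s$: a step lying entirely to the right of $a_1$ would simplify the canonical word $s$ and so cannot occur; a ``crossing'' step with its two $a_i$'s on opposite sides of $a_1$ cannot be of type $\stackrel{2}{\rightarrow}$ (its $u$ contains $a_1\notin\langle a_{i+1},\dots,a_n\rangle$) and, if of type $\stackrel{3}{\rightarrow}$, deletes only the \emph{left} $a_i$; every other step lies entirely on the left. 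Hence the right-hand portion is never altered, so $\Can(w' a_1 s)=\tilde p\, a_1 s$ for some $\tilde p$, its unique $a_1$ gives $\T_1\Can(w' a_1 s)=a_1 s$, and therefore $\T_1\Can w=a_1 s=\Can\T_1 w$.

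I expect the main obstacle to be precisely this invariant in the last paragraph: one must argue carefully, using the exact range conditions in the definitions of $\stackrel{2}{\rightarrow}$ and $\stackrel{3}{\rightarrow}$, that no simplification ever touches or crosses past the unique $a_1$ from the right, and that the canonicity of $s$ forbids any internal simplification of the right tail. The bookkeeping of which occurrence a step removes, together with the fact that letters never migrate across the fixed $a_1$, is the delicate point; everything else reduces to the already-established commutation and factorization lemmas.
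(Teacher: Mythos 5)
Your proposal is correct and follows essentially the same route as the paper's own proof: reduce to $i=1$, write $w=w'\,\T_1 w$ with $\Can\T_1 w=a_1 s$, and show via Lemma \ref{L: Can(uv)=Can(Can(u)Can(v))} and an analysis of simplifying steps that nothing to the right of the unique $a_1$ in $w'a_1 s$ is ever altered, so $\Can(w'a_1s)=\tilde p\,a_1 s$. The only difference is that you spell out in more detail two points the paper leaves implicit, namely why $\T_i$ commutes with the deletion $\de_{\{1,\dots,i-1\}}$ and why the head $a_1$ of $\T_1 w$ survives every simplifying step; both justifications are sound.
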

\begin{proof}
Once again, we may assume without loss of generality that $i = 1$.
If there are no occurrences of $a_1$ in $w$, then both sides equal the empty word and we are done.

Otherwise, using Remark \ref{O: w=w'T_i(w)}$(ii)$, write $w = u \T_1 w$ and $\Can \T_1 w = a_1 v$. We are asked to show that
\[
\T_1 \Can (u \T_1 w) = \Can (\T_1 w),
\]
which is equivalent, using Lemma \ref{L: Can(uv)=Can(Can(u)Can(v))}, to
\[
\T_1 \Can (u a_1 v) = a_1 v.
\]
Notice that $v$ is canonical and $u \in \langle a_2, \dots, a_n\rangle$. The simplifying steps that can occur on a word of type $u a_1 v$ may only affect $u$: indeed, $v$ is canonical, there is an only occurrence of $a_1$ in $u a_1 v$, and the only special words that begin in $u$ and end in $v$ contain an occurrence of $a_1$, thus leading to a $\stackrel{3}{\rightarrow}$.

An easy induction now shows that $\Can(u a_1 v) = u' a_1 v$, where $u'$ is a quasi-subword of $u$, hence $\T_1 \Can(u a_1 v) = a_1 v$.
\end{proof}

\begin{lem}\label{L: Can_I(w)<Can(w)}\label{L: Can_I(Can)=Can_I}
Let $w\in \W(A)$, $1 \leq k \leq n$. Then
\begin{itemize}
\item[$(i)$]
$\Can_{\{k,\dots, n\}}w = \Can_{\{k,\dots, n\}} \Can w$;
\item[$(ii)$]
$\Can_{\{k,\dots, n\}}w$ is a quasi-subword of $\Can w$.
\end{itemize}
\end{lem}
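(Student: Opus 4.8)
The plan is to deduce both parts from the single observation that the deletion morphism $\de_I$, with $I=\{1,\dots,k-1\}$, descends to a well-defined endomorphism of $\mathrm{K}_n$; part $(ii)$ then follows formally from $(i)$ together with the transitivity of $\qsubw$.

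For $(i)$, recall that $\Can$ is constant on the fibres of $\pi$ and that $\pi(w)=\pi(\Can w)$. Hence it suffices to show $\pi(\de_I w)=\pi(\de_I \Can w)$, and for this it is enough to check that the monoid homomorphism $\pi\circ\de_I\colon \W(A)\to \mathrm{K}_n$ is constant on the fibres of $\pi$, i.e. that it respects the defining relations of $\mathrm{K}_n$. I would verify this relation by relation. Under $\de_I$ each generator $a_i$ is sent either to itself or to $\star$, so the relation $a_i^2=a_i$ is preserved; for the relations $a_ia_ja_i=a_ja_ia_j=a_ia_j$ (with $i<j$) one checks the four cases according to whether each of $i,j$ lies in $I$. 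Whenever a deletion leaves a repeated letter $a_\ell a_\ell$, it is absorbed by the idempotency $a_\ell^2=a_\ell$ holding in $\mathrm{K}_n$, and all three words then map to the same element. This yields a well-defined induced map $\overline{\de_I}\colon \mathrm{K}_n\to \mathrm{K}_n$ with $\overline{\de_I}\circ\pi=\pi\circ\de_I$, whence $\pi(\de_I w)=\pi(\de_I\Can w)$ and therefore $\Can \de_I w=\Can \de_I\Can w$, which is exactly $(i)$.

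For $(ii)$, I would use $(i)$ to reduce to the canonical case: since $\Can_{\{k,\dots, n\}}w=\Can \de_I w=\Can \de_I \Can w$, we may assume $w=\Can w$. The word $\de_I w$ is obtained from $w$ by deleting letters, so $\de_I w\qsubw w$; and by Remark \ref{O: forma canonica}$(iii)$ its canonical form $\Can \de_I w$ is a quasi-subword of $\de_I w$. Since $\qsubw$ is transitive, $\Can_{\{k,\dots, n\}}w=\Can\de_I w\qsubw w=\Can w$, as required.

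The only point requiring real care is the relation-by-relation check in $(i)$: it is routine, but one must not forget that deleting a single generator from the length-three words $a_ia_ja_i$ or $a_ja_ia_j$ can produce a square $a_\ell a_\ell$, and that equality of the three images then holds only after applying idempotency in $\mathrm{K}_n$ --- which is precisely why the argument must be run through $\pi$ rather than inside $\W(A)$. I expect no difficulty beyond this bookkeeping; in particular the argument is insensitive to the fact that $I$ is an initial segment, so the same proof would yield the analogous statement for deletion of an arbitrary subset.
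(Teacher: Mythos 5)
Your proof is correct and takes essentially the same route as the paper's: the paper establishes $(i)$ by observing that $\Can\circ\de_{\{1,\dots,k-1\}}$ is constant on fibres of $\pi$ via invariance under the simplifying steps --- which is the same relation-by-relation verification you package more explicitly as the induced endomorphism $\overline{\de_I}$ of $\mathrm{K}_n$ --- and then deduces $(ii)$ by reducing to canonical $w$ and using transitivity of $\qsubw$ exactly as you do. Your closing remark that the argument does not use that $I$ is an initial segment is also accurate.
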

\begin{proof}
Let $k>1$, the case $k = 1$ being trivial. It is easy to check that $\Can \circ \de_{\{1,\dots, k-1\}}$ is constant on fibres of $\pi$ --- it is invariant under all simplifying steps --- which takes care of $(i)$. We may therefore assume in $(ii)$ that $w$ is a canonical word; however $\Can_{\{k,\dots, n\}}w$ is obviously a quasi-subword of $w$.
\end{proof}

\section{The join operation}\label{join}
Given an update system over the complete oriented graph $\Gamma_n$, Proposition \ref{P: I, II, III} proves that its dynamics monoid is an epimorphic image of Kiselman's semigroup $\mathrm{K}_n$.
In next section, we will exhibit an update system $\S_n^\star$ over $\Gamma_n$ whose dynamics monoid is isomorphic to $\mathrm{K}_n$; from a dynamical point of view, $\S_n^\star$ serves as a {\em universal update system}.

We introduce the following operation in order to construct, later, a family of update functions.
\begin{defi}
Take $u,v\in \W(A)$. The \emph{join} of $u$ and $v$, denoted by $[u,v]$, is the shortest word admitting $u$
as quasi-subword and $v$ as suffix. Namely,
\[
[u,v]=u^+v
\]
where $u=u^+u^-$, so that $u^-$ is the longest suffix of $u$ which is a quasi-subword of $v$. \end{defi}
Notice that the decomposition $u = u^+ u^-$ strictly depends on the choice of $v$.
\begin{ex}
For instance, consider $u=cbadc$ and $v=abdc$. Then,
\[
[u,v]= cbabdc.
\]
\end{ex}
\begin{oss}\label{O: propr di [u,v]}
\mbox{}
\begin{itemize}
	\item[$(i)$]  The empty word $\star$ is a subword of every $w \in \W(A)$, so that
\[
[\star, u]=[u,\star]=u.
\]
\item[$(ii)$] If $u$ is a quasi-subword of $v$, then $[u,v]=v$.
\item[$(iii)$] If $u$ is not a quasi-subword of $v$, then
\[
[wu,v]=w u^+v=w[u,v],
\]
for every $w\in \W(A)$.
\item[$(iv)$] If $[u,v]=u^+v$, then
\[
[u,wv]=[u^+,w]v
\]
Indeed, write $u=u^+u^-$: $u^-$ is the longest suffix of $u$ which is a quasi-subword of $v$, but there could be a suffix
of $u^+$ which is a quasi-subword of $w$.
\item[$(v)$] If $u, v\in \W(A)$ are canonical, $[u,v]$ may fail to be so. For instance, $[a_1 a_2, a_2 a_1]=a_1 a_2 a_1$, which is not canonical.
\end{itemize}
\end{oss}
The following lemma will be used later in the proof of Proposition \ref{P: main-theorem_for Gamma_n}.
\begin{lem}\label{L: ux qsottop uy => x qsottop y}
Let $u,x,y\in \W(A)$, if $ux \qsubw uy$, then $x \qsubw y$.
\end{lem}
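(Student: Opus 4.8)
The plan is to prove the contrapositive statement directly by induction on the length of $u$, reducing the general case to the single-letter case. The claim $ux \qsubw uy \implies x \qsubw y$ is the ``left-cancellation'' property for the quasi-subword relation, and the natural approach is to peel off one letter of $u$ at a time. Writing $u = au'$ for some letter $a \in A$, I would first establish the case $|u| = 1$, that is, $ax \qsubw ay \implies x \qsubw y$; the inductive step then follows immediately, since $au'x \qsubw au'y$ gives $u'x \qsubw u'y$ by the one-letter case, and then $x \qsubw y$ by the inductive hypothesis.

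For the base case, suppose $ax \qsubw ay$. By definition, a quasi-subword is obtained by selecting an ordered subsequence of letter-positions, so there is an order-preserving injection $\iota$ from the positions of $ax$ into the positions of $ay$ that matches letters. The key observation is that the leftmost letter of $ax$ (the letter $a$) can always be matched to the leftmost letter of $ay$ (also $a$): even if $\iota$ sends the initial $a$ of $ax$ to some later occurrence of $a$ in $ay$, I can modify the embedding to send it to position $1$ instead, since shifting the image of the first letter leftward to the very first position only relaxes the constraint on the remaining letters. After this adjustment, the positions of $x$ (everything after the initial $a$ in $ax$) are mapped into positions of $y$ (everything after position $1$ in $ay$), which exhibits $x$ as a quasi-subword of $y$.

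The main obstacle, and the only point requiring genuine care, is verifying that the leftmost letter may indeed be re-matched to position $1$ without breaking order-preservation for the rest. This is where one must be slightly careful about the abstract formulation of quasi-subwords: I would phrase it as a ``greedy'' matching argument. Concretely, given any embedding witnessing $ax \qsubw ay$, replacing the image of the first position by $1$ yields a map whose image of position $2$ onward is unchanged and still lies strictly to the right of $1$; hence it remains a valid order-preserving injection, and its restriction to the tail $x$ is exactly an embedding of $x$ into the tail $y$. Since the paper works with quasi-subwords combinatorially rather than through a fixed formalism, I would simply invoke that any occurrence of a quasi-subword can be chosen greedily so that each letter matches the earliest available position, which makes the matching of the initial $a$ to position $1$ automatic.

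Alternatively, one could avoid the base-case bookkeeping entirely by an even cleaner induction on $|y|$, distinguishing whether $\h(uy) = \h(ux)$ is consumed at the first position of $uy$ or not; but I expect the one-letter-cancellation argument above to be the most transparent and to match the elementary, observation-style proofs used throughout this section of the paper.
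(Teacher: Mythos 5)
Your proof is correct, but it takes a genuinely different route from the paper's. You argue directly from the combinatorial definition of quasi-subword: induction on $|u|$ reduces everything to one-letter cancellation, and for $ax \qsubw ay$ you re-route the image of the initial $a$ to position $1$ of $ay$ (harmless, since the only order constraint it participates in is being smaller than the image of position $2$, which is already $\geq 2$), after which the tail of the embedding witnesses $x \qsubw y$. The paper instead deduces the lemma from the formal properties of the join operation established in Remark \ref{O: propr di [u,v]}: from $ux \qsubw uy$ it gets $[ux,uy]=uy$, then, assuming $x \not\qsubw y$ and writing $x=x^+x^-$ with $x^-$ the longest suffix of $x$ that is a quasi-subword of $y$, it computes $[ux,uy]=[ux^+,u]\,y$ and compares lengths to force $x^+=\star$. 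Your argument is more elementary and self-contained --- it needs no machinery beyond the definition of $\qsubw$ --- whereas the paper's is shorter given that the join calculus has just been set up and keeps the lemma inside the algebra of $[\,\cdot\,,\,\cdot\,]$ that is used throughout Section \ref{join}. Either proof is acceptable; just make sure, if you keep yours, to state explicitly the (easy) fact that a quasi-subword relation is witnessed by an order-preserving, letter-matching injection of positions, since the paper never formalizes $\qsubw$ in those terms.
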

\begin{proof}
As $ux$ is a quasi-subword of $uy$, by Remark \ref{O: propr di [u,v]}$(ii)$,
\[
[ux,uy]=uy.
\]
Assume that $x$ is not a quasi-subword of $y$ and write it as $x=x^+x^-$, where $x^-$ is its longest suffix which is a quasi-subword of $y$. Using Remark \ref{O: propr di [u,v]}$(iii)$,
\[
[ux,y]=ux^+y.
\]
Finally, using Remark \ref{O: propr di [u,v]}$(iv)$,
\[
[ux,uy] = [ux^+, u]y
\]
However, $[ux^+, u]=u$ can hold only if $u x^+$ is not longer than $u$, 
i.e., only if $x^+ = \star$, 
hence $x$ is a quasi-subword of $y$.
\end{proof}

\section{An update system with universal dynamics}\label{sstella}

If $U, V \subset \W(A)$, denote by $[U, V]\subset \W(A)$ the subset of all elements $[u, v], u\in U, v \in V$.
\begin{defi}
The update system $\S_n^\star$ is the triple $(\Gamma_n,S_i,f_i)$, where
\begin{enumerate}
\item $\Gamma_n$ is, as before, the complete oriented graph on $n$ vertices, where $i\to j$ if and only if $i < j$.
\item  On vertex $i$, the state set $S_i \subset \W(A)$ is inductively defined as
\[
S_i=
\begin{cases}
\{\star, a_n\} & \mbox{ if } i = n,\\
\{\star, a_{n-1}, a_{n-1}a_n\} & \mbox{ if } i = n-1,\\
\{\star\} \cup a_i [S_n, [\dots, [S_{i+2}, S_{i+1}] \dots]] & \mbox{ if } 1 \leq i \leq n-2.
\end{cases}
\]
\item  On vertex $i$, the vertex function is
\begin{eqnarray*}
f_i\colon S[i]
& \to & S_i\\
(s_{i+1},\dots, s_n)& \mapsto & a_i [s_n, [\dots, [s_{i+2},s_{i+1}]\dots]],
\end{eqnarray*}
if $i \leq n-2$, whereas $f_{n-1}(s_n) = a_{n-1} s_n$ and $f_n \equiv a_n$ is constant.

\end{enumerate}
\end{defi}
We will abuse the notation and denote by $\star$ also the system state $(\star, \dots, \star) \in S$.
The rest of the paper will be devoted to the proof of the following
\begin{prop}\label{P: main-theorem_for Gamma_n}
Consider the evaluation morphism $\FF\colon \W(A)\to D(\S_n^{\star})$, mapping each word $w\in \W(A)$ to the
corresponding evolution $\FF_w\in D(\S_n^{\star})$. If $\mathbf{p} = (p_1, \dots, p_n) = \FF_w \star$, then:
\begin{itemize}
\item[$(i)$]
One has $p_i=(\FF_w \star)_i =\Can_{\{i,\dots, n\}} \T_i w.$
\item[$(ii)$] For every choice of $k, 1\leq k \leq n$, one may find $j, 1 \leq j \leq k,$ so that
\begin{eqnarray*}
\left[p_k,\left[\dots,\left[p_2,p_1\right]\dots\right]\right]=\T_{\{1,\dots, k\}}\Can w.
\end{eqnarray*}
\end{itemize}
\end{prop}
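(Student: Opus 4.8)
The plan is to prove $(i)$ and $(ii)$ by a double induction whose outer variable is the number $n$ of vertices, exploiting the \emph{self-similarity} of $\S_n^\star$: for every $m$, the functions $f_m,\dots,f_n$ read and write only the states of vertices $\geq m$, so the restriction of $\S_n^\star$ to $\{m,\dots,n\}$ is a relabelled copy of $\S_{n-m+1}^\star$, while the letters $a_i$ with $i<m$ leave the states of the vertices $\geq m$ untouched. Hence $(\FF_w\star)_i=(\FF_{\de_{\{1,\dots,m-1\}}w}\star)_i$ for all $i\geq m$, the right-hand side being computed in the subsystem. This lets me feed the full proposition for smaller systems into the step for $\S_n^\star$; in particular, applying part $(ii)$ to the subsystem on $\{j+1,\dots,n\}$ with top index $n$ identifies the iterated join $[p'_n,[\dots,[p'_{j+2},p'_{j+1}]\dots]]$ (where $\mathbf p'=\FF_{w'}\star$) with $\Can_{\{j+1,\dots,n\}}w'$, since $\T_{\{j+1,\dots,n\}}$ acts trivially on a word all of whose letters already lie in $\{j+1,\dots,n\}$.

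I would first prove $(i)$ by induction on the length of $w$, the empty word being trivial. Write $w=a_j w'$. As $\FF_j$ alters only the $j$-th coordinate, for $i\neq j$ one has $p_i=p'_i$ and $\T_i w=\T_i w'$ by Remark \ref{O: w=w'T_i(w)}$(i)$, so these cases close by induction. For $i=j$ one has $\T_j w=a_j w'$, and the update rule together with the previous paragraph gives $p_j=a_j\Can_{\{j+1,\dots,n\}}w'$. It then remains to verify the combinatorial identity
\[
a_j\,\Can_{\{j+1,\dots,n\}}w'=\Can_{\{j,\dots,n\}}(a_j w')=\Can_{\{j,\dots,n\}}\T_j w,
\]
which, after deleting the letters below $j$ and relabelling $j\mapsto 1$, reads $a_1\Can_{\{2,\dots,n\}}W=\Can(a_1W)$ for $W\in\langle a_1,\dots,a_n\rangle$. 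Here Lemma \ref{L: can-trunc} with $i=1$ yields $\T_1\Can(a_1W)=\Can\,\T_1(a_1W)=\Can(a_1W)$, so $\Can(a_1W)$ has head $a_1$; since it also contains a single $a_1$ (Remark \ref{O: simplifying sequences}$(i)$), it equals $a_1u$ with $u=\de_1\Can(a_1W)=\Can_{\{2,\dots,n\}}W$ by Lemma \ref{L: Can_I(Can)=Can_I}$(i)$, and $(i)$ follows. The coordinates $j=n-1,n$, governed by the special functions $f_{n-1},f_n$, are checked directly.

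With $(i)$ in hand I would prove $(ii)$ by induction on $k$, abbreviating $z=\Can w$ and $c_k=[p_k,[\dots,[p_2,p_1]\dots]]$, so that $c_k=[p_k,c_{k-1}]$. For $k=1$, $(i)$ gives $p_1=\Can\T_1 w$ while Lemma \ref{L: can-trunc} gives $\T_1 z=\Can\T_1 w$, i.e. $c_1=\T_{\{1\}}z$. For the step I first record, combining Lemma \ref{L: can-trunc} with Lemma \ref{L: Can_I(Can)=Can_I}$(i)$, that $p_k=\Can_{\{k,\dots,n\}}\T_k z$; thus $p_k$ is a quasi-subword of $\T_k z$, itself a suffix of $\T_{\{1,\dots,k\}}z$, while $c_{k-1}=\T_{\{1,\dots,k-1\}}z$ is also a suffix of $\T_{\{1,\dots,k\}}z$ (Remark \ref{O: T_a < T_b o vice versa}$(iii)$). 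Since, directly from the definition, the join $[u,v]$ is a quasi-subword of every word admitting $u$ as quasi-subword and $v$ as suffix, this already gives $c_k\qsubw\T_{\{1,\dots,k\}}z$.

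The reverse inclusion $\T_{\{1,\dots,k\}}z\qsubw c_k$ is the heart of the matter and the step I expect to be the main obstacle. Writing $\T_{\{1,\dots,k\}}z=P\,c_{k-1}$ with $P\in\langle a_k,\dots,a_n\rangle$, the case $P=\star$ is immediate, so the work is in the case where $P$ has head $a_k$: one must show that the decomposition $p_k=p_k^+p_k^-$ computing the join satisfies $p_k^+=P$, i.e. that no letter of $P$ can be absorbed into $c_{k-1}$ when $p_k=\Can\bigl(P\,\de_{\{1,\dots,k-1\}}c_{k-1}\bigr)$ is embedded. I expect this to follow by tracking the simplifications of type $\stackrel{2}{\rightarrow}$ guaranteed by Lemma \ref{L: sempl-solo-a-dx}: deleting the letters below $k$ from the canonical word $\T_{\{1,\dots,k\}}z$ destroys precisely the speciality of the subwords $a_i u a_i$ straddling $P$ and $c_{k-1}$, so that the surviving prefix of $p_k$ is exactly $P$ and the remainder is a quasi-subword of $c_{k-1}$. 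Proposition \ref{P: u a_1 Can[k,n]v} and Corollary \ref{febbre} are the tools tailored to make this book-keeping precise, in particular to control how the two lowest surviving letters interact; Lemma \ref{L: ux qsottop uy => x qsottop y} and the antisymmetry of $\qsubw$ then upgrade the two inclusions to the equality $c_k=\T_{\{1,\dots,k\}}z$, the index $j$ of the statement being the head of this suffix, so that $\T_{\{1,\dots,k\}}z=\T_j z$ by Remark \ref{O: T_a < T_b o vice versa}$(iii)$.
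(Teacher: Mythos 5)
Your outer induction on $n$, your proof of part $(i)$, and the easy inclusion $c_k\qsubw\T_{\{1,\dots,k\}}\Can w$ in part $(ii)$ all match the paper's argument in substance. The gap is exactly at the step you yourself flag as ``the heart of the matter'': showing that, when $\T_{\{1,\dots,k\}}\Can w=P\,c_{k-1}$ with $P$ nonempty, no letter of $P$ is absorbed into $c_{k-1}$ when the join $[p_k,c_{k-1}]$ is formed. Your sketch --- tracking the type-$\stackrel{2}{\rightarrow}$ simplifications of Lemma \ref{L: sempl-solo-a-dx} to conclude that ``the surviving prefix of $p_k$ is exactly $P$ and the remainder is a quasi-subword of $c_{k-1}$'' --- only establishes that $p_k=Py$ with $y\qsubw c_{k-1}$ (this is essentially Corollary \ref{startsbyu}). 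But that gives $|p_k^-|\geq|y|$, i.e.\ it reproves the inclusion $c_k\qsubw Pc_{k-1}$ that you already have; what is needed is the opposite bound, namely that no suffix of $p_k$ \emph{properly containing} $y$ embeds as a quasi-subword of $c_{k-1}$, so that the join really restores all of $P$. Nothing in your outline addresses this.

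The missing claim is precisely the paper's Proposition \ref{P: technical}: if $u,v\in\langle a_{j+1},\dots,a_n\rangle$ and $ua_j\Can v$ is canonical, then $[\Can(uv),v]=uv$. It is not a routine consequence of Lemma \ref{L: sempl-solo-a-dx}, Proposition \ref{P: u a_1 Can[k,n]v} and Corollary \ref{febbre}; the paper proves it by a separate induction on $n$ with a four-way case analysis on the occurrences of the second-lowest letter in $u$ and in $v$, and the decisive point (Equation \eqref{ustafuori} there) is that $u^-z\qsubw\de_2 v$ forces $u^-=\star$ --- exactly the ``no absorption'' statement above. Until you supply a proof of this (or an equivalent statement), the inductive step of part $(ii)$ in the case $P\neq\star$ is unproved. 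Everything else --- the identification $p_k=\Can\bigl(P\,\de_{\{1,\dots,k-1\}}c_{k-1}\bigr)$, the appeal to Proposition \ref{P: u a_1 Can[k,n]v} to obtain the canonicity hypothesis, and the final bookkeeping with Lemma \ref{L: ux qsottop uy => x qsottop y} --- is in place and agrees with the paper.
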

Our central result then follows immediately.
\begin{teo}\label{T: Can(w)=[p_n, dots [p_2, p_1]]}\label{main}
With the same hypotheses and notation as in Proposition \ref{P: main-theorem_for Gamma_n},
\begin{itemize}
\item[$(i)$] $\Can w=\left[p_n,[\dots,\left[p_2,p_1\right]\dots]\right]$;
\item[$(ii)$] if $u,v\in \W(A)$, then $\FF_u = \FF_v$ if and only if $\Can u=\Can v$;
\item[$(iii)$] $\mathrm{K}_n$ is isomorphic to $D(\S_n^{\star})$.
\end{itemize}
\end{teo}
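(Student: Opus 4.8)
The plan is to obtain all three assertions as formal consequences of Proposition~\ref{P: main-theorem_for Gamma_n}, combined with the factorization established in Proposition~\ref{P: I, II, III}. For $(i)$ I would simply specialize part~$(ii)$ of the Proposition to $k=n$. Since every nonempty word has its head in $A=\{a_1,\dots,a_n\}$, the truncation $\T_{\{1,\dots,n\}}$ fixes every word, so that $\T_{\{1,\dots,n\}}\Can w=\Can w$ (both sides being $\star$ when $w$ is empty). Proposition~\ref{P: main-theorem_for Gamma_n}$(ii)$ then reads $\left[p_n,[\dots,[p_2,p_1]\dots]\right]=\Can w$, which is exactly $(i)$.

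For $(ii)$ I would argue the two implications separately. In one direction, recall that $\Can$ is constant on the fibres of $\pi$ (Theorem~\ref{T: Kudryavtseva}), so $\Can u=\Can v$ is equivalent to $\pi(u)=\pi(v)$ in $\mathrm{K}_n$; by Proposition~\ref{P: I, II, III} the evaluation $\FF$ factors through $\mathrm{K}_n$, whence $\pi(u)=\pi(v)$ forces $\FF_u=\FF_v$. In the other direction, if $\FF_u=\FF_v$ as maps $S\to S$, then in particular they agree on the distinguished state $\star$, so $(\FF_u\star)_i=(\FF_v\star)_i$ for every $i$; feeding these equal coordinates into the formula of part~$(i)$ yields $\Can u=\left[p_n,[\dots,[p_2,p_1]\dots]\right]=\Can v$. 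This is precisely where the careful design of $\S_n^\star$ pays off: distinct evolutions are already separated by their action on the single seed $\star$.

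Finally, $(iii)$ assembles the previous steps. By Proposition~\ref{P: I, II, III}, $\FF$ induces a surjective monoid homomorphism $\overline{\FF}\colon \mathrm{K}_n\to D(\S_n^\star)$ with $\FF=\overline{\FF}\circ\pi$; surjectivity holds because the generators $a_i$ map onto the generating local functions $\FF_i$. For injectivity I would invoke $(ii)$: if $\overline{\FF}(\pi(u))=\overline{\FF}(\pi(v))$, i.e.\ $\FF_u=\FF_v$, then $\Can u=\Can v$, hence $\pi(u)=\pi(v)$. Thus $\overline{\FF}$ is a bijective homomorphism, i.e.\ an isomorphism.

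I do not expect a genuine obstacle at this stage, since the theorem is a formal corollary of the Proposition; all of the real work lies in Proposition~\ref{P: main-theorem_for Gamma_n} itself. The only point demanding care is the forward implication of $(ii)$, where one must observe that equality of the two evolutions as endomorphisms of $S$ need only be tested on the single state $\star$ in order to recover the canonical forms through part~$(i)$.
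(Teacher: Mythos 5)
Your argument is correct and follows essentially the same route as the paper: $(i)$ is the $k=n$ case of Proposition~\ref{P: main-theorem_for Gamma_n}$(ii)$, the forward implication of $(ii)$ comes from recovering $\Can u$ and $\Can v$ from the action on $\star$ via $(i)$ while the converse uses the factorization of Proposition~\ref{P: I, II, III}, and $(iii)$ is the resulting bijectivity of the induced map $\mathrm{K}_n\to D(\S_n^\star)$. No gaps; your explicit remark that equality of evolutions need only be tested on the single state $\star$ is exactly the point the paper's proof relies on.
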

\begin{proof}
\mbox{}
\begin{itemize}
	\item[$(i)$]
Use Proposition \ref{P: main-theorem_for Gamma_n}$(ii)$ when $k=n$. Then
$$\left[p_n,\left[\dots,\left[p_2,p_1\right]\dots\right]\right] = \T_{\{1,\dots, n\}} \Can w = \Can w.$$

\item[$(ii)$] If $\FF_u = \FF_v$, then they certainly compute the same state on $\star$. However, by $(i)$, one may recover both $\Can u$ and $\Can v$ from this state, hence $\Can u = \Can v$. The other implication follows trivially, as $\Can u = \Can v$ forces $u$ and $v$ to induce the same element in $\mathrm{K}_n$, hence the same dynamics on $\S_n^*$.
\item[$(iii)$] This is just a restatement of $(ii)$: distinct elements in $\mathrm{K}_n$ have distinct $\S_n^*$-actions, since they act in a different way on the system state $\star$.
\end{itemize}
\end{proof}

\begin{oss}
As an immediate consequence of Proposition \ref{P: main-theorem_for Gamma_n}, if $j = \h(\Can w)$, then $\T_j \Can w = \Can w$, hence 
%
\[
\left[p_j,\left[\dots,\left[p_2,p_1\right]\dots\right]\right] = \left[p_n,\left[\dots,\left[p_2,p_1\right]\dots\right]\right] =\Can w.
\]
\end{oss}\

We will prove Proposition \ref{P: main-theorem_for Gamma_n} by induction on the number $n$ of vertices. The following technical fact is needed in the proof of the inductive step, and we assume in its proof that Proposition \ref{P: main-theorem_for Gamma_n} and Theorem \ref{T: Can(w)=[p_n, dots [p_2, p_1]]} hold on $\S_k^{\star}$, for $k<n$.

\begin{prop}\label{P: technical}
Let $u, v \in \langle a_{j+1}, a_{j+2}, \dots, a_n\rangle$ be chosen so that $u a_j \Can v$ is a canonical word. Then $[\Can(uv), v] = u v.$ In particular, $[\Can(uv), a_j v] = u a_j v$.
\end{prop}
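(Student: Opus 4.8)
The plan is to prove $[\Can(uv),v]=uv$ first, and then derive the ``in particular'' claim about $a_jv$ almost immediately. Recall that the join $[\Can(uv),v]$ is defined as $w^+v$, where $\Can(uv)=w^+w^-$ and $w^-$ is the longest suffix of $\Can(uv)$ that is a quasi-subword of $v$. So the task is to show that, in $\Can(uv)$, the longest suffix which is a quasi-subword of $v$ is exactly the image of the $v$-part, i.e.\ that $\Can(uv)$ has the form $u'v$ with $u'$ contributing nothing past $v$ as a suffix-quasi-subword, and in fact that $w^+v$ collapses back to $uv$.

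First I would exploit the canonicity hypothesis on $ua_j\Can v$. Since $u,v\in\langle a_{j+1},\dots,a_n\rangle$, the letter $a_j$ appears exactly once in $ua_j\Can v$ and is smaller than every other letter present. This is precisely the setting of Corollary~\ref{startsbyu} and of Lemma~\ref{L: can-trunc}: deleting the single $a_j$ from the canonical word $ua_j\Can v$ gives $u\Can v$, and the only simplifying steps available from $u\Can v$ to $\Can(u\Can v)=\Can(uv)$ are of type $\stackrel{2}{\rightarrow}$ acting on the right of the position where $a_j$ stood, none of which alters the prefix $u$. Thus I expect to conclude that $\Can(uv)$ retains $u$ as a \emph{prefix}; write $\Can(uv)=u\,r$ where $r$ is obtained from $\Can v$ (equivalently from $v$) by a simplifying sequence, hence $r$ is a quasi-subword of $v$. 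By Remark~\ref{O: forma canonica}$(iv)$ every letter of $v$ still appears in $\Can(uv)$, and by Corollary~\ref{startsbyu} the prefix $u$ is preserved intact.

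The heart of the matter is then to compute the join. Since $\Can(uv)=u\,r$ with $r\qsubw v$, I want to show that $r$ is exactly the longest suffix of $\Can(uv)$ that is a quasi-subword of $v$, so that the decomposition $\Can(uv)=u^+u^-$ relative to $v$ gives $u^+=u$ and $u^-=r$, whence $[\Can(uv),v]=u^+v=uv$. The inclusion $r\qsubw v$ is already in hand; the delicate point, and the step I expect to be the main obstacle, is ruling out that a \emph{longer} suffix, dipping into the $u$-part, could also be a quasi-subword of $v$. Here I would use the canonicity of $ua_j\Can v$ more forcefully: if some suffix $u''r$ with $\star\neq u''$ a suffix of $u$ were a quasi-subword of $v$, then $u''$ itself would be a suffix of $u$ that is a quasi-subword of $v$, and I would derive a contradiction with $ua_j\Can v$ being canonical by producing a non-special subword $a_iwa_i$ (the repeated letters matching a letter of $u''$ against its copy in $v$, with only $a_j$ and larger-indexed letters in between). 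Since $u''\in\langle a_{j+1},\dots,a_n\rangle$ sits to the left of the unique $a_j$, such a coincidence would create a subword of the form $a_i\,(\dots a_j\dots)\,a_i$ that cannot be special, contradicting the hypothesis.

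Finally, for the ``in particular'' statement, observe that $a_j$ does not occur in $v$, so $a_jv$ is not a quasi-subword of anything appearing as the deleted material; more directly, by Remark~\ref{O: propr di [u,v]}$(iv)$ one has $[\Can(uv),a_jv]=[\,(\Can(uv))^+,a_j\,]\,v$ where $(\Can(uv))^+=u$ from the first part, and since $u\in\langle a_{j+1},\dots,a_n\rangle$ contains no $a_j$, the word $u$ is not a quasi-subword of the single letter $a_j$ unless $u=\star$; applying Remark~\ref{O: propr di [u,v]}$(iii)$ gives $[u,a_j]=ua_j$, so that $[\Can(uv),a_jv]=ua_jv$, as claimed. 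I would streamline this last computation using Remark~\ref{O: propr di [u,v]}$(iii)$ and $(iv)$ directly, reducing it to the already-established identity $[\Can(uv),v]=uv$.
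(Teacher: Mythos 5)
Your reduction of the problem is sound as far as it goes: by Corollary \ref{startsbyu} and Lemma \ref{L: sempl-solo-a-dx}, $\Can(uv)$ does admit $u$ as a prefix, say $\Can(uv)=ur$ with $r\qsubw v$, and the whole proposition amounts to showing that no suffix of $\Can(uv)$ strictly longer than $r$ is a quasi-subword of $v$. The gap is in how you rule this out. You pass from ``$u''r\qsubw v$ for some nonempty suffix $u''$ of $u$'' to ``$u''\qsubw v$'', and then claim that the latter already contradicts the canonicity of $ua_j\Can v$ by producing a non-special subword. That implication is false. Take $n=3$, $j=1$, $u=a_2$, $v=\Can v=a_3a_2$: the word $ua_1\Can v=a_2a_1a_3a_2$ is canonical, because its only subword of the form $a_ixa_i$ is $a_2\,(a_1a_3)\,a_2$, which \emph{is} special ($a_1$ below and $a_3$ above), and yet the nonempty suffix $u''=a_2$ of $u$ is a quasi-subword of $v$. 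The matching copy of $a_2$ in $v$ lies to the right of $a_3$, so the intervening factor automatically contains a letter larger than $2$ and no non-special subword appears. What actually saves the conclusion in this example is the order constraint you discard: $u''r=a_2a_3$ is not a quasi-subword of $a_3a_2$, even though $u''$ and $r$ separately are quasi-subwords of $v$. By decoupling $u''$ from $r$ you throw away exactly the information that makes the statement true, so the contradiction you hope to derive simply does not exist.

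This is precisely why the paper does not argue directly but proves Proposition \ref{P: technical} by induction on the alphabet, splitting into four cases according to whether $a_2$ occurs in $u$ and/or in $v$. In the hard cases it deletes all occurrences of $a_2$, applies the inductive identity $[\Can(u\,\de_2 v),\de_2 v]=u\,\de_2 v$, writes $\Can(uv)=uy$ and $\Can(u\,\de_2 v)=uz$ with $z\qsubw y$ via Corollary \ref{startsbyu} and Lemma \ref{L: ux qsottop uy => x qsottop y}, and deduces from $u^-y\qsubw v$ that $u^-z\qsubw\de_2 v$, which the inductive identity forbids unless $u^-=\star$; Corollary \ref{febbre} is what makes the canonicity hypothesis persist after deleting $a_2$. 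Your treatment of the ``in particular'' clause via Remark \ref{O: propr di [u,v]}$(iii)$--$(iv)$ is fine once the main identity is available, but the central step of your argument is unsupported and needs either this inductive machinery or a genuine replacement for it.
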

\begin{proof}
We may assume, without loss of generality, that $j = 1$.

The statement is easily checked case by case when $n=1$ or $2$.
Notice that $u$ can have at most one occurrence of $a_2$, whereas $v$ may have many. Let us therefore distinguish four cases:
\begin{enumerate}
\item There is no occurrence of $a_2$ in either $u$ or $v$.

In this case, we can use inductive assumption, after removing the vertex indexed by $2$.

\item There is a single occurrence of $a_2$ in $u$.

Write $u = u' a_2 u''$. As $u' a_2 u'' a_1 \Can v$ is canonical, then, using Corollary \ref{febbre}$(ii)$ and Lemma \ref{L: Can(uv)=Can(Can(u)Can(v))},
\begin{eqnarray*}
\Can (u' a_2 u'' v) & = & \Can(u' a_2 u'' \Can v)\\
                    & = & u' a_2 \Can(u'' \Can v) \\
                    & = & u' a_2 \Can(u'' v).
\end{eqnarray*}

Thus, we need to compute $[u' a_2 \Can(u'' v), v]$. However, applying Case 1 gives
\[
[\Can(u'' v), v] = u'' v,
\]
hence $[u' a_2 \Can(u'' v), v] = u' a_2 u'' v$ follows from Remark \ref{O: propr di [u,v]}
$(iii)$.
\item $a_2$ occurs in $v$, but not in $u$.

Write $\Can v = v' a_2 v''$. As $ua_1 \Can v = u a_1 v' a_2 v''$ is canonical, then, using Corollary \ref{febbre}$(i)$, also $u a_1 \Can(v' v'')$ is canonical. However Lemma \ref{L: Can_I(Can)=Can_I}
informs us that
\begin{eqnarray*}
\Can(v'v'') & = & \Can\de_2 \Can v \\
            & = & \Can_{\{3,\dots, n\}} \Can v\\
            & = & \Can_{\{3,\dots, n\}} v \\
            & = & \Can \de_2 v,
\end{eqnarray*}
so that $u a_1 \Can \de_2 v$ is canonical. We may then use Case 1 to argue that
\begin{equation}\label{ustafuori}
[\Can(u \de_2 v), \de_2 v] = u \de_2 v.
\end{equation}

Lemma \ref{L: Can_I(w)<Can(w)} implies that $\Can(u \de_2 v) = \Can \de_2 (uv)$ is a quasi-subword of $\Can(uv)$. By Corollary \ref{startsbyu}, both $\Can(uv)$ and $\Can (u \de_2 v)$ admit $u$ as a prefix, so, using Lemma \ref{L: ux qsottop uy => x qsottop y}, we can write
\[
\Can(uv) = uy, \qquad \Can(u\de_2 v) = uz, \qquad z \qsubw y.
\]

We need to show that $[\Can(uv), v] = u v$. Now, applying Corollary \ref{febbre} $(ii)$,
\[
\Can(uv) = \Can(uv') a_2 v'' = ux a_2 v'',
\]
where $x$ is some quasi-subword of $v'$. As $$y = x a_2 v'' \qsubw v' a_2 v'' = \Can v \qsubw v,$$
then $[\Can(uv), v] = u^+ v$
where $u=u^+u^-$ and $u^- y \qsubw v$.
However, this would force
\begin{eqnarray*}
u^- z\qsubw u^- y \qsubw v
\end{eqnarray*}
hence also $u^- z \qsubw  \de_2 v$; and this is only possible if $u^- = \star$, as Equation \eqref{ustafuori} shows.

\item $a_2$ occurs both in $u$ and in $v$.

This is similar to the previous case.
Write $u = u' a_2 u''$, $\Can v = v' a_2 v''$. Applying Corollary \ref{febbre}$(iii)$ to the canonical word $u a_1 \Can v = u' a_2 u'' a_1 v' a_2 v''$, we obtain that both
\[
u' a_2 u'' a_1 \Can \de_2 v = u' a_2 u'' a_1 \Can(v'v'')
\]
and
\[
u' a_2 \Can(u'' \de_2 v) = u' a_2 \Can(u'' \Can \de_2 v) = u' a_2 \Can(u''v'v'')
\]
are canonical. Similarly,
\begin{eqnarray*}
\Can(uv)            & = & \Can(u' a_2 u'' v' a_2 v'') \\
                    & = & \Can(u' a_2 u'' v' v'')\\
                    & = & \Can(u' a_2 \Can(u'' v' v''))\\
                    & = & u' a_2 \Can(u'' \de_2 v).
\end{eqnarray*}
Now, as $u'' a_1 \Can \de_2 v$, being a subword of $u' a_2 u'' a_1 \Can \de_2 v$, is canonical, we have 
\begin{equation}\label{u''stafuori}
[\Can(u'' \de_2 v), \de_2 v] = u'' \de_2 v.
\end{equation}

As a consequence,
\begin{eqnarray*}
[\Can(u' a_2 u'' v' a_2 v''), \de_2 v]  & = & [u' a_2 \Can(u'' \de_2 v), \de_2 v]\\
                                        & = & [u' a_2, u'']\de_2 v \\
                                        & = & u' a_2 u'' \de_2 v,
\end{eqnarray*}
and one may complete the proof as in Case 3.
\end{enumerate}
\end{proof}

\section{Proof of Proposition \ref{P: main-theorem_for Gamma_n}}
The basis of induction $n=1$ being trivial, we assume that Proposition \ref{P: main-theorem_for Gamma_n} and Theorem \ref{T: Can(w)=[p_n, dots [p_2, p_1]]} hold for a complete graph on
less than $n$ vertices,

\begin{proof}[Proof of Proposition \ref{P: main-theorem_for Gamma_n}]
Let us start by proving Part $(i)$.

Let $w\in \W(A)$.
By inductive hypothesis, for a complete graph on $n-1$ vertices $2, \dots, n$, the SDS map $\FF_{\de_1 w }$ constructs on the $i$th vertex the state
\[
p_i=\Can_{\{i,\dots, n\}} \T_i\de_1 w = \Can_{\{i, \dots, n\}} \T_i w,
\]
hence, the statement holds for all vertices $i>1$.

As far as vertex $1$ is concerned, we need to show that $p_1 = \Can \T_1 w$.
If $w$ does not contain
the letter $a_1$, then $p_1=\star = \T_1 w$ and there is nothing to prove; otherwise, $ (\FF_w)_1 = (\FF_{\T_1 w})_1$. We know that the state on vertex $1$ depends only on the
system state $(p'_2, \dots, p'_n)$, which is computed by $\de_1\T_1 w$ on the subgraph indexed by $\{2, \dots, n\}$, i.e.,
\begin{eqnarray*}
p_1  & = & a_1 \left[p'_n,[ \dots, [p'_3,p'_2]\dots]\right].
\end{eqnarray*}
However, we may apply induction hypothesis and use Theorem \ref{T: Can(w)=[p_n, dots [p_2, p_1]]}$(i)$, which yields
\begin{eqnarray*}
p_1 & = & a_1 \left[p'_n,[ \dots, [p'_3,p'_2]\dots]\right]\\
    & = & a_1 \Can \de_1\T_1 w \\
    & = & \Can \T_1 w
\end{eqnarray*}

As for Part $(ii)$ of Proposition \ref{P: main-theorem_for Gamma_n}, we proceed by induction on $k$. The basis of induction descends directly from Part $(i)$ and Lemma \ref{L: can-trunc}, as
\begin{eqnarray*}
p_1 & = & \Can\T_1 w \\
    & = & \T_1 \Can w .
\end{eqnarray*}

Assume now $k > 1$.  By inductive hypothesis, there exists $j < k$ such that
\[
\left[p_{k-1},\left[\dots,\left[p_2,p_1\right]\dots\right]\right]=\T_{\{1,\dots, k-1\}}\Can w.
\]
Recall that, by Remark \ref{O: T_a < T_b o vice versa}
, either $\T_k \Can w$ is a suffix of $\T_{\{1,\dots, k-1\}} \Can w$ or vice versa. In the former case, we know by Part $(i)$ and Lemma \ref{L: can-trunc}, that
\begin{eqnarray}\label{p_k-T-Can}
p_k  =  \Can_{\{k,\dots, n\}}\T_k w=\T_k\Can_{\{k,\dots, n\}}w.
\end{eqnarray}
By Lemma \ref{L: Can_I(w)<Can(w)}, $\Can_{\{k,\dots, n\}}w$ is a quasi-subword of $\Can w$, hence $p_k = \T_k\Can_{\{k,\dots, n\}}w$ 
is a quasi-subword of $\T_k\Can w$, which is a suffix of $\T_{\{1,\dots, k-1\}}\Can w$. Thus,
\begin{eqnarray*}
\left[p_k,\left[p_{k-1}\left[\dots,\left[p_2,p_1\right]\dots\right]\right]\right] & = &  [p_k,\T_{\{1,\dots, k-1\}}\Can w]\\
                                                                            & = &  \T_{\{1,\dots, k-1\}}\Can w = \T_{\{1,\dots, k\}}\Can w.
\end{eqnarray*}

If, instead, $\T_{\{1,\dots, k-1\}}\Can w$ is a suffix of $\T_k\Can w$, pick $j$ so that $\T_j \Can w = \T_{\{1,\dots, k-1\}}\Can w$ and argue as follows.

Choose $u$
such that $\T_k\Can w = u\T_j\Can w.$ As $u\in \langle a_i, k\leq i \leq n\rangle$, then by Part $(i)$
\[
p_k  =  \Can_{\{k,\dots, n\}} (u \T_j \Can w) =  \Can (u \de_{\{1, \dots, k-1\}} \T_j \Can w).
\]
By Proposition \ref{P: u a_1 Can[k,n]v}, as $\T_k\Can w= u \T_j \Can w$ is canonical, we may argue that the word 
$u a_j \Can_{\{k,\dots, n\}} \T_j \Can w$ is also canonical. 
Apply now Proposition \ref{P: technical} to  
$u a_j \de_{\{1, \dots, k-1\}} \T_j \Can w$, to obtain
\begin{eqnarray*}
[p_k, a_j \de_{\{1, \dots k-1\}} \T_j \Can w] & = & [\Can(u \de_{\{1, \dots, k-1\}} \T_j \Can w), a_j \de_{\{1, \dots, k-1\}} \T_j \Can w] =\\
                                        & = & u a_j  \de_{\{1, \dots, k-1\}} \T_j \Can w,
\end{eqnarray*}
hence a fortiori $[p_k, \T_j \Can w] = u \T_j \Can w = \T_k \Can w$, which equals $\T_{\{1, \dots, k\}} \Can w$.

\end{proof}
\section{Conclusions and further developments}\label{conclude}
If $\Gamma$ is a finite directed acyclic graph, then any update system $\S$ supported on $\Gamma$ induces a dynamics monoid $D(\S)$ which naturally arises as a quotient of $\HK_\Gamma$. We refer to the smallest quotient of $\HK_\Gamma$ through which all evaluation maps $\FF: \W(V) \to D(\S)$ factor as the {\em universal dynamics monoid} $D(\Gamma)$. In this paper, we have proved that $D(\Gamma_n) \simeq \HK_{\Gamma_n} = \mathrm{K}_n$.
\begin{conj}
$D(\Gamma) \simeq \HK_\Gamma$ for every finite directed acyclic graph.
\end{conj}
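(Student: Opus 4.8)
The plan is to recast the conjecture as an existence statement about \emph{faithful} dynamics, and then to generalize the construction of $\S_n^\star$ from the complete graph $\Gamma_n$ to an arbitrary finite directed acyclic graph $\Gamma=(V,E)$. Since Proposition \ref{P: I, II, III} already yields a surjection $\HK_\Gamma\twoheadrightarrow D(\Gamma)$, and since by definition every $D(\S)$ is a quotient of $D(\Gamma)$, it suffices to exhibit a \emph{single} update system $\S_\Gamma^\star$ supported on $\Gamma$ whose evaluation morphism $\FF\colon\W(V)\to D(\S_\Gamma^\star)$ separates the elements of $\HK_\Gamma$, i.e. such that $\FF_u=\FF_v$ forces $u$ and $v$ to represent the same element of $\HK_\Gamma$. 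Any such system makes the composite $\HK_\Gamma\to D(\Gamma)\to D(\S_\Gamma^\star)$ injective, and together with the surjection $\HK_\Gamma\twoheadrightarrow D(\Gamma)$ this forces $\HK_\Gamma\simeq D(\Gamma)$. In other words, the conjecture is equivalent to the assertion that $\HK_\Gamma$ always admits a faithful representation by $\Gamma$-local update functions, exactly as Theorem \ref{T: Can(w)=[p_n, dots [p_2, p_1]]} provides for $\Gamma=\Gamma_n$.

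The construction of $\S_\Gamma^\star$ should mimic $\S_n^\star$: take each state set $S_i$ to consist of words over $V$, let the update function on a vertex $i$ read the states of its out-neighbours $x[i]=\{j\colon i\to j\}$ and assemble them into a single word by iterated joins, and then prepend the letter $a_i$. The only structural input the complete graph provided was its total order, which dictated both the nesting $[p_n,[\dots,[p_2,p_1]\dots]]$ of the joins and the shape of the canonical words of $\mathrm{K}_n$. For a general directed acyclic graph one must replace this by a linear extension (topological order) of the partial order induced by $E$: the update function on vertex $i$ would combine the states $s_j$, for $j\in x[i]$, via nested joins taken in that order, and the expected analogue of Proposition \ref{P: main-theorem_for Gamma_n} would read $p_i=\Can_{\Gamma,\,\downarrow i}\,\T_i w$, where $\Can_{\Gamma,\downarrow i}$ denotes a canonical form for the subgraph spanned by $i$ and its descendants. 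The induction would then proceed by deleting a \emph{source} of $\Gamma$ --- the role played by vertex $1$ here through the morphisms $\de_1$ and $\T_1$ --- since removing a source leaves a directed acyclic graph and supports the inductive hypothesis.

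Two ingredients, both unavailable off the shelf, are the crux. First, one needs a canonical-form theory for $\HK_\Gamma$ on an arbitrary finite directed acyclic graph: a confluent, terminating system of simplifications generalizing the three rewriting rules $\stackrel{1}{\rightarrow},\stackrel{2}{\rightarrow},\stackrel{3}{\rightarrow}$ introduced in \cite{MR2561084}, together with a characterization of normal forms analogous to ``only special subwords $a_i u a_i$''. For $\Gamma_n$ this rests squarely on Theorem \ref{T: Kudryavtseva} and the linear order of the letters; for a general partial order, termination is plausible (length decreases at each step) but confluence is delicate, and this is precisely where the combinatorics of $\HK_\Gamma$ is still poorly understood. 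Second, because a general graph contains \emph{incomparable} out-neighbours $j,j'\in x[i]$ --- which commute in $\HK_\Gamma$ by relation \eqref{3} --- the assembled state must be shown to be independent of the chosen linear extension. This is the general-position analogue of Corollary \ref{febbre} and Proposition \ref{P: technical}: one must prove that joins of states attached to mutually non-connected vertices may be reordered without altering the resulting canonical word. I expect this order-independence to be the main obstacle, as it is exactly the point at which the total order of $\Gamma_n$ was used silently throughout, and at which higher-order interactions among three or more pairwise-incomparable vertices could, a priori, produce relations not already implied by the pairwise Hecke-Kiselman relations. Establishing that no such extra relation arises --- equivalently, that the join assembly is well defined on $\HK_\Gamma$ --- would complete the proof.
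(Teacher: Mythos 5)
This statement is a \emph{conjecture} in the paper: the authors offer no proof, only computational verification for graphs with at most $4$ vertices and most graphs on $5$ vertices. Your proposal is likewise not a proof but a research program, and you say as much. The one piece that is genuinely established is the reduction: since Proposition \ref{P: I, II, III} gives a surjection $\HK_\Gamma \twoheadrightarrow D(\Gamma)$, it suffices to exhibit a single update system on $\Gamma$ whose evaluation morphism separates elements of $\HK_\Gamma$. That observation is correct and matches the philosophy the paper states explicitly in Section \ref{conclude} (an initial pointed update system would do the job). Everything past that point is conditional on ingredients you correctly identify as unavailable.

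The gap is therefore concrete and twofold, exactly where you place it. First, the entire proof for $\Gamma_n$ rests on Theorem \ref{T: Kudryavtseva}, i.e.\ on a normal-form theory for $\mathrm{K}_n$ (special subwords, the confluent rewriting rules $\stackrel{1}{\rightarrow},\stackrel{2}{\rightarrow},\stackrel{3}{\rightarrow}$, and the resulting map $\Can$); no analogue is known for $\HK_\Gamma$ on a general directed acyclic graph, and the paper's own remarks on the difficulty of even deciding finiteness of $\HK_Q$ for mixed graphs signal that this is not a routine extension. Every technical lemma used in the induction (Lemmas \ref{L: sempl-solo-a-dx}, \ref{L: can-trunc}, \ref{L: Can_I(w)<Can(w)}, Proposition \ref{P: u a_1 Can[k,n]v}, Corollary \ref{febbre}, Proposition \ref{P: technical}) is proved by case analysis on these rewriting rules and on the total order of the letters, so none of them transfers. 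Second, defining the update function $f_i$ by iterated joins over $x[i]$ requires choosing an order on the out-neighbours, and showing the result is independent of that choice (equivalently, that no higher-order relation among three or more pairwise non-connected vertices is forced) is precisely the content of the conjecture in disguise; asserting it is circular without a new argument. So the proposal correctly frames the problem and reproduces the paper's intended strategy, but it does not close either of the two gaps that keep the statement a conjecture.
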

This has been computationally checked for all instances with $\leq 4$ vertices and on most graphs on $5$ vertices. A conceptual approach to the problem of determining the universal dynamics of a given graph $\Gamma$ is by constructing an initial object, if there exists one, in the category of (pointed) update systems supported on $\Gamma$; here ``pointed'' means that a preferred system state has been chosen.
\begin{conj}
The pair $(\S_n^\star, \star)$ is an initial object in the category of pointed update systems supported on $\Gamma_n$.
\end{conj}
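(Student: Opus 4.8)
The plan is to first pin down the morphisms of the category in play: a morphism $(\S,\mathbf{s}_0)\to(\S',\mathbf{s}_0')$ of pointed update systems on $\Gamma_n$ should be a map of system states carrying $\mathbf{s}_0$ to $\mathbf{s}_0'$ and intertwining the $\Gamma_n$-local functions, $\Phi\circ\FF_i=\FF'_i\circ\Phi$ for every $i$. Any such $\Phi$ then commutes with every $\FF_w$, so it is completely determined on the reachable orbit $\{\FF_w\mathbf{s}_0:w\in\W(A)\}$ of the base point by the forced rule $\FF_w\mathbf{s}_0\mapsto\FF'_w\mathbf{s}_0'$. This already delivers uniqueness of any morphism out of $(\S_n^\star,\star)$, so the whole problem reduces to producing one morphism $\Phi(\FF_w\star)=\FF'_w\mathbf{s}_0'$ for an arbitrary target $(\S',\mathbf{s}_0')$.

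The crux of existence is well-definedness, and this is exactly where the main theorem enters. If $\FF_w\star=\FF_{w'}\star$ in $\S_n^\star$, then Theorem \ref{main}$(ii)$ yields $\Can w=\Can w'$, i.e.\ $w$ and $w'$ represent the same element of $\mathrm{K}_n$. On the other hand, Proposition \ref{P: I, II, III} shows that the evaluation morphism $\FF'\colon\W(A)\to D(\S')$ factors through $\HK_{\Gamma_n}=\mathrm{K}_n$, so $\Can w=\Can w'$ forces $\FF'_w=\FF'_{w'}$ and hence $\FF'_w\mathbf{s}_0'=\FF'_{w'}\mathbf{s}_0'$. Thus $\Phi$ is well defined on the orbit of $\star$; it sends $\star\mapsto\mathbf{s}_0'$ and satisfies $\Phi(\FF_i\FF_w\star)=\FF'_{iw}\mathbf{s}_0'=\FF'_i\,\Phi(\FF_w\star)$, so it is a morphism. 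In a slogan, $(\S_n^\star,\star)$ is initial precisely because its dynamics monoid is the \emph{largest} admissible one, $\mathrm{K}_n$, through which every other target factors.

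To match the componentwise flavour of update-system morphisms, I would finally record that $\Phi$ is computed vertex by vertex. By Proposition \ref{P: main-theorem_for Gamma_n}$(i)$ the $i$-th coordinate of $\FF_w\star$ is $p_i=\Can_{\{i,\dots,n\}}\T_i w$, and the analogous assertion for the target is that $(\FF'_w\mathbf{s}_0')_i$ is produced by the \emph{autonomous} subsystem of $\S'$ on the vertices $\{i,\dots,n\}$ (updates of vertices $<i$ neither read nor write these coordinates), which again factors through Kiselman's semigroup on $\{a_i,\dots,a_n\}$; hence $(\FF'_w\mathbf{s}_0')_i$ depends on $w$ only through $\Can_{\{i,\dots,n\}}\T_i w=p_i$. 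This ``coordinate determination'' statement lets us write $\Phi=(\phi_i)$ with $\phi_i(p_i)=(\FF'_w\mathbf{s}_0')_i$, a bona fide family of component maps, and its proof for a general target is an exact replay of the reasoning behind Proposition \ref{P: main-theorem_for Gamma_n}$(i)$, using only the update-system structure and the subsystem recursion.

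The step I expect to be most delicate --- and the reason the statement is only conjectured --- is the precise choice of morphisms. One must impose the intertwining condition only on the \emph{reachable} part of the state space: the full product $\prod_i S_i^\star$ contains non-reachable configurations (already for $n=2$ the state $(a_1a_2,\star)$ is not of the form $\FF_w\star$), and demanding that $\Phi$ intertwine $\FF_i$ and $\FF'_i$ on every such configuration would be too strong, since an adversarial target $\S'$ whose update functions discriminate between reachable and non-reachable inputs would then admit no morphism at all. Phrasing morphisms through the reachable orbit --- equivalently, as maps of pointed $\W(A)$-sets compatible with the $\FF_i$ --- removes this obstruction and lets the clean argument above run; the residual work is to confirm that this is indeed the right categorical framework and to establish the coordinate determination lemma uniformly over all targets.
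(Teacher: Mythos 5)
First, a point of order: this statement is one of the two \emph{conjectures} in Section \ref{conclude}, and the paper contains no proof of it, so there is nothing to compare your attempt against; I can only assess the attempt on its own terms. On those terms, there is a genuine gap, and you have in fact located it yourself in your final paragraph. Your argument is sound exactly when a morphism $(\S,\mathbf{s}_0)\to(\S',\mathbf{s}_0')$ is taken to be a base-point-preserving map of \emph{reachable orbits} intertwining the letters: then well-definedness of $\FF_w\star\mapsto\FF'_w\mathbf{s}_0'$ follows from Theorem \ref{main}$(i)$--$(ii)$ (recover $\Can w$ from $\FF_w\star$) together with Proposition \ref{P: I, II, III} (the target evaluation factors through $\mathrm{K}_n$), and uniqueness is forced. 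But with that definition of morphism, the category of pointed update systems is equivalent to the category of pointed cyclic $\W(A)$-sets on which the Hecke--Kiselman relations hold, and initiality of $(\S_n^\star,\star)$ is then a \emph{tautological restatement} of Theorem \ref{main}: the orbit of $\star$ is the free $\mathrm{K}_n$-orbit. The authors explicitly remark that ``the dynamics of an initial update system is clearly universal,'' i.e.\ initiality is intended as a statement \emph{stronger} than the universality they prove; if the conjecture reduced to Theorem \ref{main} by choice of morphisms, they would not have left it open. So your proposal does not prove the conjecture --- it redefines the category until the conjecture coincides with the theorem already proved.

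The unresolved mathematical content is precisely what you set aside: in any structural notion of morphism --- say families $\phi_i\colon S_i^\star\to S_i'$ with $\phi_i\bigl(f_i(\mathbf{s}[i])\bigr)=f_i'\bigl(\phi(\mathbf{s})[i]\bigr)$ required for \emph{all} neighbourhood configurations $\mathbf{s}[i]\in S[i]$ --- the orbit map pins $\phi_i$ down only on reachable vertex states fed by reachable neighbourhood tuples. The coordinates of reachable states are strongly correlated (by Proposition \ref{P: main-theorem_for Gamma_n}$(i)$, $p_j=\Can_{\{j,\dots,n\}}\T_j w$ for a common $w$), so $\prod_{j>i}S_j^\star$ contains tuples realized by no $\FF_w\star$; your own example $(a_1a_2,\star)$ for $n=2$ is the system-state analogue. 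One must either show that the orbit-defined $\phi$ admits a coherent extension to all of $S^\star$ for \emph{every} target, or show that the intertwining constraints at unreachable configurations are vacuous, or else prove the conjecture false in the strict category and justify the weaker one --- and none of these is addressed. Your ``coordinate determination'' lemma (that $(\FF'_w\mathbf{s}_0')_i$ depends only on $\Can_{\{i,\dots,n\}}\T_i w$, via the autonomous subsystem on $\{i,\dots,n\}$ and Proposition \ref{P: I, II, III}) is correct and would be a useful ingredient in such a program, but it only governs behaviour along the orbit and does not touch the extension problem. As it stands, the proposal proves a correct but essentially immediate corollary of Theorem \ref{main}, not the open statement.
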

The dynamics of an initial update system is clearly universal and its elements are told apart by their action on the marked system state $\star$: this has been the philosophy underneath the proof of Theorem \ref{T: Can(w)=[p_n, dots [p_2, p_1]]}.

\section*{Acknowledgements}
We would like to thank Francesco Vaccarino for his interest in the problem, Volodymyr Mazorchuk and Riccardo Aragona for giving a preliminary assessment of this work, Henning Mortveit for encouraging comments and Giorgio Ascoli for indicating possible applications. We are grateful to the unknown referees for their suggestions, which were decisive in order to improve and substantially simplify the exposition.

Results in this paper are part of EC's doctoral dissertation, which was partially written during a leave of absence from Banca d'Italia. This work was completed while ADA was visiting University of Pisa on a sabbatical leave; he was supported by Ateneo Fundings from ``La Sapienza'' University in Rome.

\providecommand{\bysame}{\leavevmode\hbox to3em{\hrulefill}\thinspace}
\providecommand{\MR}{\relax\ifhmode\unskip\space\fi MR }
\providecommand{\MRhref}[2]{%
  \href{http://www.ams.org/mathscinet-getitem?mr=#1}{#2}
}
\providecommand{\href}[2]{#2}

\vfill	
\end{document}